\documentclass[11pt]{article}

\usepackage[margin=1in]{geometry}
\usepackage{amsmath,amssymb,amsthm,amsfonts}

\usepackage{mathrsfs}
\usepackage{enumitem}
\usepackage{hyperref}
\usepackage{microtype}
\usepackage{mathtools}
\usepackage{hyphenat}
\usepackage{parskip}
\usepackage{fancyhdr}
\newcommand\shorttitle{Bounded-Repetition Stack Sorting}
\newcommand\authors{Fayzan Khan}
\newtheorem{conjecture}{Conjecture}

\newtheorem{theorem}{Theorem}[section]
\newtheorem{lemma}[theorem]{Lemma}
\newtheorem{proposition}[theorem]{Proposition}
\newtheorem{corollary}[theorem]{Corollary}
\theoremstyle{definition}
\newtheorem{definition}[theorem]{Definition}
\newtheorem{example}[theorem]{Example}
\theoremstyle{remark}
\newtheorem{remark}[theorem]{Remark}

\title{\textbf{Stack Sorting on Words with Bounded-Repetition}}
\author{Fayzan Khan}
\date{\today}

\begin{document}

\maketitle

\begin{abstract}
Let us consider the family of operators of stack sorting $(s_m)_{m \geq 1}$ acting on words according to the following rule: $s_m(w)$ is the output produced by the usual West algorithm of stack sorting on the input word $w$, but allowing for at most $m$ repetitions of the same letter stacked in succession. Thus, the operator $s_m$ extends the classic West operator $s_1$. Our first main theorem gives an explicit formula for the action of the operator $s_m$ on binary words $w = a^pba^q$ (with distinct letters $a>b$):
\begin{align*}
s_m(a^pba^q) &= a^{\max(p-m,0)}\,b\,a^{\min(p,m)+q}, \\
d_m(a^kb) &= \left\lceil \frac{k}{m} \right\rceil.
\end{align*}
From the above formulas it follows that for every $m < m'$ we have $d_m(w) \geq d_{m'}(w)$ for binary words $w = a^kb$, and moreover, the ratio $d_m(w)/d_{m'}(w)$ is always possible to be chosen exactly $m'/m$, so the ratios $d_m/d_{m'}$ are unbounded for this family on the specified class -- this is an explicit form of the expected separation-speedup phenomenon. Additionally, we provide explicit constructions of words of length $4$ for which $s_m$ and $s_{m'}$ do not commute, and words of length $7$ for which the speed $d_m$ is not monotonic in $m$. Further, we provide a full recursive characterization -- generalizing the classical $231$-avoidance result of Knuth and West -- of words which can be sorted using $s_m$ only once, and then we employ it to show that there cannot exist an $m$-independent classical pattern avoidance condition for one-pass sortable words. Finally, we give a complete proof of the inequality
$$d_1(w) \geq d_m(w) \geq d_\infty(w) \quad \text{for all } m$$
for two-letter words, together with a structural result (an exact coupling of $s_\infty$ with classical stack sorting on a canonical linear extension) establishing it for a single pass in general; nevertheless, we show that the family of operators $(s_m)$ does not, after all, lie entirely between $s_1$ and $s_\infty$, exhibiting for every $n \geq 7$ an explicit word $w_n$ with $d_1(w_n) = n-4 < n-3 = d_m(w_n)$ for every $m \geq 2$, including $m = \infty$, so that the inequality above is false in general.
\bigskip
\scriptsize

\noindent\textbf{Mathematics Subject Classification (2020):} Primary 05A05

\noindent\textbf{Keywords:} stack sorting, bounded-repetition, West operator, pattern avoidance, words, sorting passes, non-monotonicity
\end{abstract}
\bigskip

\section{Introduction}

Stack-sorting is one of the classical and richest topics in enumerative and bijective combinatorics. It was pioneered by Knuth~\cite{knuth} with the question about which permutations can be sorted by a last-in-first-out stack in one pass through the input; West~\cite{west} answered this question in the full generality: a permutation $\pi$ can be sorted into increasing order in one pass, $s(\pi)=\mathrm{id}$, if and only if it avoids the pattern $231$. This theorem gave rise to a whole industry in studying stack-sorting and iterated stack sorting; we refer the reader to the surveys~\cite{bona-survey} and \cite{bona-book} of Bóna, as well as recent results of Defant and his coauthors~\cite{defant1,defant2}.

Traditionally, stack sorting is done on \emph{permutations}, so each letter occurs at most once. However, allowing general words -- sequences over a totally ordered alphabet where letters are allowed to repeat -- opens up a natural additional parameter: what should the stack do in case the next input letter equals the current stack top? There are clear and natural conventions on two ends: break the tie always and force a pop (in order to not keep two equal letters), or never break the tie and allow arbitrarily long runs of equal letters. These two boundary conventions were introduced and studied by Defant and Kravitz~\cite{defant-kravitz} (2018), who named the resulting operators the \emph{tortoise} operator (always break ties) and the \emph{hare} operator (never break ties). In our notation, the tortoise operator is exactly $s_1$, the classical West operator, and the hare operator is exactly $s_\infty$, defined precisely below. Here we consider the entire one-dimensional continuum \emph{between} these two known boundary cases, with an additional parameter $m\geq 1$: we allow runs of up to $m$ equal letters before forcing a pop, so that $s_1$ recovers the tortoise operator and $s_\infty=\lim_{m\to\infty}s_m$ recovers the hare operator of Defant and Kravitz. The finite interpolation parameter $1<m<\infty$, and the operator family $(s_m)_{m\geq1}$ it generates, is the new object introduced in this paper.

Apart from intrinsic interest as a natural generalization of the tortoise and hare operators of Defant and Kravitz to take a finite, tunable amount of repetition into account, the family $(s_m)_{m\geq1}$ provides a clean toy model of a bounded stack with a local buffer, where equal letters cannot be stacked on top of each other beyond a fixed capacity. Several natural questions arise: How much faster is it to use a generous budget $m$ rather than a stingy one $m'$ ($m>m'$)? Does $s_m$ commute with $s_{m'}$? Is the speed monotonically increasing with $m$? Which words can be sorted in one pass, for a given $m$? And how does the family behave at its known ends $s_1$ and $s_\infty$, the tortoise and the hare of Defant and Kravitz~\cite{defant-kravitz}?

We give answers to these questions as follows.

\begin{itemize}[leftmargin=1.6em]
\item In Section~\ref{sec:prelim} we formalize $s_m$ precisely as a stack process and record an elementary but important \emph{stabilization} property that allows us to interpret the limiting operator $s_\infty$ as a finite object rather than a true limit.
\item In Section~\ref{sec:formula} we prove a precise closed form for $s_m$ on words of the form $a^pba^q$ (Lemma~\ref{lem:main}), and we deduce an exact formula for the number of passes to sort a two-letter word $a^kb$ (Theorem~\ref{thm:dm}).
\item In Section~\ref{sec:speed} we apply this closed form to build, for each pair $m < m'$, a word such that the ratio $d_m/d_{m'}$ equals \emph{any} given value $m'/m$ -- an exact quantification of the speed-separation phenomenon (Corollary~\ref{cor:speedsep}).
\item In Section~\ref{sec:comm} we provide an explicit four letter word on which $s_m$ and $s_{m'}$ do not commute.
\item In Section~\ref{sec:mono} we provide an explicit seven letter word on which $d_m$ is \emph{not} increasing in $m$, and thus the sequence $(s_m)_{m\geq1}$ is not totally ordered by speed.
\item In Section~\ref{sec:pattern} we prove a precise recursive characterization of the one-pass $s_m$-sortable permutations (Block Decomposition Theorem, Theorem~\ref{thm:block}), which generalizes West's result for $231$-avoiding permutations, and we show how to apply it to exclude any such characterization of one-pass sortable permutations by classical pattern avoidance alone.
\item In Section~\ref{sec:sandwich} we state our "sandwich" conjecture $d_1 \geq d_m \geq d_\infty$, prove it fully for all two-letter words, and prove a structural coupling theorem which expresses $s_\infty$ as classical stack sorting applied to a canonical linear extension -- completing, for a single pass, a proof strategy which we will show does \emph{not} naively generalize to multiple passes, and explain why. We finally show that the sandwich conjecture is false for every $n\geq7$. We thank Professor Colin Defant for his observation that the sandwich conjecture was indeed false, providing us with the initial word that allowed us to truly test the conjecture.
\end{itemize}

The proofs of all results appearing in the body of the paper are elementary and self-contained; the only time computation is used is in \emph{motivating} the conjecture of Section~\ref{sec:sandwich}, exactly as described there.

\section{Preliminaries: formalizing \texorpdfstring{$s_m$}{sm}}
\label{sec:prelim}

A \emph{word} is a finite sequence of elements of some totally ordered alphabet. Formally: a word is a finite sequence $w=w_1w_2\cdots w_n$, where $|w|=n$ is the length of $w$. Word $u=u_1\cdots u_n$ is \emph{sorted}, if $u_1\leq u_2\leq \cdots \leq u_n$.

\begin{definition}[The operator $s_m$]
\label{def:sm}
Consider a positive integer and a word $w=w_1\cdots w_n$ and apply the following algorithm to $w$ using a stack (initially empty) and an output queue (initially empty) as well. Consider each time a new letter of the input $x$ and a top letter of the stack $y$:
\begin{enumerate}[label=(\roman*)]
\item push $x$, if the stack is empty;
\item push $x$, if $y>x$;
\item if $y=x$: take the maximal run of equal $y$ letters on the stack of length $r$ and push $x$, if $r<m$. Otherwise, if $r=m$, pop $y$ from the stack to the output and re-check $x$ with a new top of the stack;
\item pop $y$ to the output, and re-check $x$ with a new top of the stack, if $y<x$.
\end{enumerate}
The process is finished when all letters of the input $w$ are read, and remaining elements of the stack are moved to the output in reverse order. The final output word is $s_m(w)$.
\end{definition}

Each of the operators `push' and `output' consumes exactly one letter either from the input or the stack respectively and produces one letter in the output; both input letters and the stack do not change the total number of letters. It is important to note explicitly the following straightforward property of $s_m$:

\begin{remark}[Conservation]
\label{rem:conservation}
The word $s_m(w)$ is a permutation of $w$: it is of the same length and consists of the same multiset of letters as the input word $w$.
\end{remark}

Let's define for $t\geq0$ $s_m^0(w)=w$ and $s_m^t(w)=s_m(s_m^{t-1}(w))$, and let
$$d_m(w) \;=\; \min\{\, t\geq 0 \;:\; s_m^t(w) \text{ is sorted}\,\},$$
that is, the number of applications of $s_m$ to $w$ required to make $w$ sorted. Note that the operator $s_m$ for $m=1$ coincides exactly with the classical West operator (since the case (iii) is always pop for $r\geq1=m$); thus $s_1$ is the classical West operator and $d_1$ is the classical iterated-sorting-time. Under this identification, $s_1$ is precisely the \emph{tortoise} operator of Defant and Kravitz~\cite{defant-kravitz}, who studied it as one of the two boundary cases of tie-breaking on words.

On the other hand, there should exist an operator $s_\infty$ for which the case (iii) would never be pop: we observe that there exists such an operator exactly for any large enough finite $m$:

\begin{lemma}[Stabilization]
\label{lem:stab}
Consider a word $w$ of length $n=|w|$. For any $m>n$, the words $s_m(w)$ are all equal; denote their common value by $s_\infty(w)$.
\end{lemma}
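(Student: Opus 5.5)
The plan is to compare the executions of the algorithm of Definition~\ref{def:sm} for two values $m, m' > n$ step by step, and to show that they make exactly the same sequence of push and pop moves. The only place where the parameter $m$ enters the algorithm is case (iii). There the run length $r$ of equal letters on top of the stack is compared with $m$: we push if $r<m$ and pop if $r=m$. So it suffices to show that, for $m>n$, case (iii) always results in a push, that is, the comparison $r<m$ always holds.

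The key observation is that the stack only ever contains letters of $w$. By Remark~\ref{rem:conservation}, or directly from the fact that every letter is pushed at most once, the stack holds at most $n$ letters at any moment. Hence any run of equal letters on top of the stack has length $r\le n$.

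In fact, at the moment case (iii) is checked, the current input letter $x$ has not yet been pushed. So the stack contains at most $n-1$ letters and $r\le n-1<m$. For every $m>n$, and indeed already for every $m\ge n$, case (iii) therefore never triggers the pop branch. It behaves exactly as a push, independently of $m$.

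We then argue by induction on the number of elementary steps that the configurations (remaining input, stack contents, output) of the runs for $m$ and for $m'$ coincide. The base case is the empty stack and empty output. For the inductive step, in cases (i), (ii) and (iv) the rule does not depend on $m$. In case (iii) both runs push, by the bound above. Both runs therefore end with the same stack, which is flushed in the same order, so $s_m(w)=s_{m'}(w)$.

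There is no real obstacle here. The only point needing care is to state the bound $r\le n-1$ precisely: $r$ counts letters already on the stack, not including $x$. This guarantees that the threshold $r=m$ is unreachable. The result is a well-defined common value $s_\infty(w)$, which is the output of the algorithm with case (iii) replaced by an unconditional push.
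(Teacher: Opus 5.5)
Your proof is correct and follows essentially the same route as the paper: the stack holds only letters from distinct positions already read and not yet popped, so the run length satisfies $r\le n<m$, the pop branch of rule (iii) is never taken, and the process is identical for every $m>n$. Your remark that the incoming letter $x$ is not yet on the stack is also valid. It gives $r\le n-1$ and hence stabilization already for $m\ge n$, a minor sharpening of the paper's bound $r\le n$.
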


\begin{proof}
At any moment during the processing of $w$, all symbols currently present on the stack were popped from some position of $w$ that has been already seen, but not yet popped to the output; distinct symbols on the stack correspond to distinct positions (because a popped symbol is never returned to the stack). Thus, at any moment, the stack never contains more than $n$ symbols; in particular, any run of symbols of the same kind on the top of the stack never has more than $n$ symbols. Therefore, for $m>n$, the case "$r=m$" in the rule (iii) in Definition~\ref{def:sm} never happens (as $r \leq n < m$), and this rule becomes just the push operation. This describes exactly the same process for all $m > n$ with the same output.
\end{proof}

In the same way we define $d_\infty(w)=\min\{t\geq0: s_\infty^t(w) \text{ is sorted}\}$; by Remark~\ref{rem:conservation} and Lemma~\ref{lem:stab}, applied to every iterate (as every iterate of $w$ by $s_m$ has length $|w|$), this is a well-defined object achieved by choosing any particular $m>|w|$. In such a way, $s_\infty$ and $d_\infty$ are genuine finite objects and require no limits.

\section{The single-pass formula and the closed-form sorting time}
\label{sec:formula}

We establish here the basic computational tool of this paper: a full description of the result of one pass of $s_m$ on a word composed of a run of a big letter, then one occurrence of a small letter, then another run of the big letter. This closed form, and the exact sorting-time formula it yields (Theorem~\ref{thm:dm}), are stated for arbitrary finite $m$ and are not addressed by the boundary-only treatment of $s_1$ and $s_\infty$ in Defant and Kravitz~\cite{defant-kravitz}.

\begin{lemma}
\label{lem:main}
Let $a>b$ be letters and let $p,q\geq0$. Then
$$s_m(a^pba^q) \;=\; a^{\max(p-m,0)}\;b\;a^{\min(p,m)+q}.$$
\end{lemma}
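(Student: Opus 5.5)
The plan is to trace the stack process of Definition~\ref{def:sm} directly on the input $a^pba^q$, in three phases: reading the initial block $a^p$, reading $b$, and reading the final block $a^q$. Throughout, I will track the stack contents and the output written so far, with the stack written bottom to top.

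\emph{Phase 1: reading $a^p$.} The first $a$ is pushed by rule (i). Each later $a$ meets top $y=a$ with a run of length $r$ equal to the whole stack. While $r<m$ the letter is pushed. When $r=m$, rule (iii) pops one $a$ to the output. The stack then still has top $a$ with run $m-1<m$, so the incoming $a$ is pushed. I will prove by induction on the number of letters read that after reading $a^j$ the stack is $a^{\min(j,m)}$ and the output is $a^{\max(j-m,0)}$. The only care needed is the re-check clause in (iii): after a pop, the same letter $x$ must be compared again. If $m\ge 1$ the stack is nonempty after the pop exactly when $m\ge2$; when $m=1$ the stack becomes empty and rule (i) pushes. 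In both cases we land in the claimed state. So after Phase 1 the stack is $a^{\min(p,m)}$ and the output is $a^{\max(p-m,0)}$.

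\emph{Phase 2: reading $b$.} If $p=0$ the stack is empty and $b$ is pushed by (i). Otherwise the top is $a>b$ and $b$ is pushed by (ii). Either way, the stack becomes $a^{\min(p,m)}b$ with $b$ on top, and the output is unchanged.

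\emph{Phase 3: reading $a^q$.} If $q\ge1$, the first $a$ meets top $b<a$, so rule (iv) pops $b$ to the output. Then $a$ is re-checked against the block $a^{\min(p,m)}$, and from here on every comparison is between equal letters $a$. I will observe that no further output order matters: from this point only $a$'s remain, both on the stack and in the input. Every subsequent pop emits an $a$, and the final flush emits the remaining $a$'s. By Remark~\ref{rem:conservation}, the rest of the output therefore consists of exactly $\min(p,m)+q$ copies of $a$. If $q=0$, the final flush emits $b$ and then $a^{\min(p,m)}$, which gives the same word.

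Concatenating the three phases gives $a^{\max(p-m,0)}\,b\,a^{\min(p,m)+q}$. The main point needing care is Phase 1, specifically the boundary behaviour of rule (iii) with its re-check, including the degenerate case $m=1$ where the stack empties. Everything after the $b$ is popped is forced by the observation that only one letter value remains, so no detailed simulation of Phase 3 is needed.
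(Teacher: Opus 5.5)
Your proof is correct. Its overall shape matches the paper's: a three-stage trace of $a^p$, then $b$, then $a^q$. Your Phases 1 and 2 coincide with the paper's Stages 1 and 2, and you are slightly more careful in two places:
\begin{itemize}
\item At $m=1$, the paper says the re-examined $a$ meets a top $a$ with run $m-1$. In fact the stack is empty there and rule (i) fires, giving the same push.
\item You treat $q=0$ separately, which the paper leaves implicit.
\end{itemize}

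The real difference is Phase 3. The paper simulates the final block explicitly. It shows that the first $\min(q,m-p')$ copies of $a$ are absorbed, that each of the remaining $\max(q+p'-m,0)$ copies leaks one $a$, and then merges the leaked and drained pieces using $\max(X-m,0)+\min(X,m)=X$. The paper uses conservation only as an after-the-fact sanity check. You instead make conservation the engine of the argument: once $b$ has left the stack, only copies of $a$ remain, so the tail of the output must be $a^{p+q-\max(p-m,0)}=a^{\min(p,m)+q}$, with no capacity bookkeeping.

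Your route is shorter. The paper's route yields strictly more information: the exact split of the final block into a leaked part $a^{\max(p'+q-m,0)}$ and a residual stack run $a^{\min(p'+q,m)}$. That split is what the inductive step of the Block Decomposition Theorem later invokes (``exactly as in Phase~3 of the proof of Lemma~\ref{lem:main}''). There a nonempty $G_\ell$ follows the run, so its output is interleaved between the leaked copies $M^{\lambda_\ell}$ and the drained copies $M^{r_\ell}$. In that setting a pure counting argument would no longer determine the output.
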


\begin{proof}
The process is broken down into three stages.

\emph{Stage 1 (the initial $a$'s).} For the first $\min(p,m)$ $a$'s, push takes place without any pop: each of them starts the run (rule (i)) or adds to it (rule (iii) since run length never exceeds $m$ before reaching $m$). As soon as the run length reaches $m$ (possible only if $p>m$), each subsequent $a$ leads to the pop option in rule (iii): $a$ on the top is popped to the output, reducing the run to $m-1$, and this very $a$ is re-examined against the new top, which is also $a$ with run $m-1<m$ and thus leads to its push back, restoring the run size to $m$. Thus, each of such $a$ results exactly in one $a$ popped to the output with the run size unchanged. After processing all $p$ $a$'s the output at this point is thus $a^{\max(p-m,0)}$, and the stack is exactly the pure run of $a$'s of size $p':=\min(p,m)$.

\emph{Stage 2 ($b$).} As $a>b$, the top (that is, $a$ if $p'>0$, or the stack is empty if $p'=0$) leads to the rules (ii) or (i): either way, $b$ is pushed unconditionally, on top of the $a^{p'}$ run.

\emph{Stage 3 (the final $a$'s).} The first final $a$ finds the top $b<a$, thus, it is not $\geq$ the top and, instead, rule (iv) fires and $b$ is popped to the output immediately -- the only appearance of $b$ in the whole output. The stack becomes the pure run $a^{p'}$ again, and this very pending $a$ is re-examined against it: it is exactly the Phase-1 computation again, but starting from the run of size $p'$ rather than $0$, with the capacity $m$. Of the $q$ final $a$'s, the first $\min(q,m-p')$ get absorbed with no pops (run length grows to $\min(p'+q,m)$), while each of the remaining $\max(q-(m-p'),0)$ $a$'s gives exactly one pop $a$, just like in Stage 1. In the end, the stack becomes a pure run of size $\min(p'+q,m)$.

\emph{Drain.} The input is exhausted and the stack is drained to the output directly.

Concatenating all stages with the drain, we obtain the output
$$a^{\max(p-m,0)}\cdot b \cdot a^{\max(q+p'-m,0)}\cdot a^{\min(p'+q,m)}.$$
With the use of the identity $\max(X-m,0)+\min(X,m)=X$ for $X=p'+q=\min(p,m)+q$, the last two terms merge into $a^{\min(p,m)+q}$, exactly matching the claimed formula.
\end{proof}

\begin{remark}
In addition to the correctness proof, the conservation of the total number of $a$'s (see Remark~\ref{rem:conservation}) provides an easy way to double-check the result: $\max(p-m,0)+\min(p,m)+q=p+q$, again by the same identity with $X=p$.
\end{remark}

With the above single-pass formula, we immediately obtain the exact formula for the number of passes needed to completely sort a two-letter word.

\begin{theorem}
\label{thm:dm}
For $a>b$ and $k\geq1$,
$$d_m(a^kb) \;=\; \left\lceil \frac{k}{m} \right\rceil.$$
\end{theorem}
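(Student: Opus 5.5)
The plan is to iterate Lemma~\ref{lem:main} with $q$ as a free parameter, since the iterates of $a^kb$ all have the shape $a^pba^q$. Write $w_0=a^kb$, which is $a^pba^q$ with $p=k$, $q=0$. By Lemma~\ref{lem:main}, if $w_t=a^{p_t}ba^{q_t}$, then
$$w_{t+1}=s_m(w_t)=a^{p_{t+1}}\,b\,a^{q_{t+1}},\qquad p_{t+1}=\max(p_t-m,0),\quad q_{t+1}=\min(p_t,m)+q_t .$$
This class of words is closed under $s_m$, and only $p_t$ needs tracking. A one-line induction gives the closed form $p_t=\max(k-tm,0)$. Indeed, if $p_t=\max(k-tm,0)$, then $\max(p_t-m,0)=\max(k-(t+1)m,0)$, since $\max(\max(X,0)-m,0)=\max(X-m,0)$ for $m\geq1$.

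Next we identify when $a^pba^q$ is sorted. Since $a>b$, the word $a^pba^q$ is sorted if and only if no $a$ precedes the $b$, that is, if and only if $p=0$. For $p\geq1$ the adjacent pair $ab$ is a descent, and for $p=0$ the word is $ba^q$, which is weakly increasing.

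It follows that
$$d_m(a^kb)=\min\{t\geq0:\ \max(k-tm,0)=0\}=\min\{t\geq0:\ tm\geq k\}=\left\lceil \frac{k}{m}\right\rceil .$$
Here $k\geq1$ ensures $w_0$ itself is unsorted, consistent with $\lceil k/m\rceil\geq1$.

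No step is a real obstacle. The only care needed is to apply Lemma~\ref{lem:main} with general $q$, since the trailing $a$-run grows after the first pass, and to check the edge case $p_t<m$ in the recursion. Both are handled by the $\max/\min$ form of the lemma. The same argument works for $m=\infty$ by Lemma~\ref{lem:stab}, taking $m>k+1$, which gives $d_\infty(a^kb)=1$.
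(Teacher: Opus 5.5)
Your proposal is correct and follows essentially the same route as the paper: you iterate Lemma~\ref{lem:main} on words of the form $a^pba^q$, observe that the leading exponent obeys $p_{t+1}=\max(p_t-m,0)$ independently of the trailing run, solve this to get $p_t=\max(k-tm,0)$, and use that $a^pba^q$ is sorted exactly when $p=0$. The only difference is cosmetic: you close the induction with the identity $\max(\max(X,0)-m,0)=\max(X-m,0)$, whereas the paper splits into three cases.
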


\begin{proof}
By Lemma~\ref{lem:main}, applied with $q=0$,
$$s_m(a^kb) = a^{\max(k-m,0)}\,b\,a^{\min(k,m)},$$
and it is again of the form $a^{p}ba^{q}$. Denoting the leading exponent by $k_t$ after $t$ passes (with $k_0=k$), Lemma~\ref{lem:main} tells us that this leading exponent satisfies the recursion
$$k_{t} = \max(k_{t-1}-m,\,0),$$. Furthermore, \emph{it does not depend on the trailing exponent} at all. We claim that $k_t=\max(k-tm,0)$ for all $t\geq0$: indeed, it holds for $t=0$ by definition, and inductively:
\begin{itemize}[leftmargin=1.6em]
\item if $k-tm\geq m$, then $k_t=k-tm\geq m$, thus, $k_{t+1}=k_t-m=k-(t+1)m=\max(k-(t+1)m,0)$;
\item if $0\leq k-tm<m$, then $k_t=k-tm<m$, thus, $k_{t+1}=\max(k_t-m,0)=0=\max(k-(t+1)m,0)$;
\item if $k-tm<0$, then $k_t=0$ and $k_{t+1}=0=\max(k-(t+1)m,0)$.
\end{itemize}
Now, the word after $t$ passes is $a^{k_t}b\,a^{k-k_t}$, which is sorted if and only if $k_t=0$ (if $k_t=0$ the word is $ba^{k-k_t}$, weakly increasing since $b<a$; if $k_t\geq1$, the word contains $a$ immediately followed ultimately by $b<a$, a descent). Also $a^kb$ itself ($k\geq1$) is not sorted. So $d_m(a^kb)$ is the least $t$ with $\max(k-tm,0)=0$, i.e. the least $t$ with $tm\geq k$, i.e. $t=\lceil k/m\rceil$.
\end{proof}

\section{Sharp speed separation}
\label{sec:speed}

It is a direct implication of Theorem~\ref{thm:dm} that a greater repetition budget does not slow down sorting a two-letter word:

\begin{corollary}
\label{cor:basicmono}
Given $k\geq1$ and $m<m'$, $d_m(a^kb)\geq d_{m'}(a^kb)$.
\end{corollary}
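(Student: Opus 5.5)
The plan is to read the corollary off directly from the closed form in Theorem~\ref{thm:dm}. That theorem gives $d_m(a^kb)=\lceil k/m\rceil$ and $d_{m'}(a^kb)=\lceil k/m'\rceil$. The inequality therefore reduces to the statement that the map $x\mapsto\lceil k/x\rceil$ is weakly decreasing on positive integers $x$.

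To show this, I would first note that $m<m'$ with both positive gives $k/m \geq k/m'$, since $k\geq 1>0$. Next I would use that the ceiling function is weakly monotone: if $u\geq v$ then $\lceil u\rceil\geq\lceil v\rceil$. This holds because $\lceil u\rceil\geq u\geq v$, and $\lceil v\rceil$ is the least integer that is at least $v$. Combining the two facts gives
\[
\left\lceil \frac{k}{m}\right\rceil\;\geq\;\left\lceil \frac{k}{m'}\right\rceil,
\]
which is exactly $d_m(a^kb)\geq d_{m'}(a^kb)$.

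The case $m'=\infty$ is not included in the statement as written, since $m'$ is compared as an integer. If one wanted it, Lemma~\ref{lem:stab} lets us replace $\infty$ by any finite $m'>k+1=|a^kb|$. Then $d_\infty(a^kb)=\lceil k/m'\rceil=1$, which is at most $\lceil k/m\rceil$.

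There is no real obstacle here. The only point to check is that Theorem~\ref{thm:dm} applies under the same hypotheses, namely $a>b$, $k\geq1$, and $m,m'\geq1$, and this is immediate.
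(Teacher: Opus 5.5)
Your proposal is correct and matches the paper's proof: you take the closed form $d_m(a^kb)=\lceil k/m\rceil$ from Theorem~\ref{thm:dm}, note that $k/m\geq k/m'$, and conclude by monotonicity of the ceiling function. Your aside on $m'=\infty$ via Lemma~\ref{lem:stab} is also correct, though the statement does not need it.
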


\begin{proof}
The function $\lceil k/m\rceil$ decreases when $m$ increases: if $m<m'$ then $k/m\geq k/m'$, and so $\lceil k/m\rceil\geq\lceil k/m'\rceil$.
\end{proof}

Corollary~\ref{cor:basicmono} states, on the two-letter family, the qualitative intuition that a greater stack repetition budget cannot decrease sorting speed. The following result gives an \emph{explicit, precisely calculable} ratio by which these speeds can differ, answering the question of how large the speed gap between $s_m$ and $s_{m'}$ can be.

\begin{corollary}[Speed separation]
\label{cor:speedsep}
Let $m<m'$ and $N\geq1$. Let $w=a^{mm'N}b$. Then
$$d_m(w) = m'N, \qquad d_{m'}(w) = mN,$$
(exact, as there is no rounding in this case since $mm'N$ is divisible by both $m$ and $m'$), and therefore
$$\frac{d_m(w)}{d_{m'}(w)} \;=\; \frac{m'}{m}$$
for \emph{all} $N\geq1$. In particular, if we fix $m$ and let $m'\to\infty$, then the ratio above becomes unbounded: $s_m$ can require an arbitrary multiple of the number of passes that $s_{m'}$ requires to sort the same word.
\end{corollary}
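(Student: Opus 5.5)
The argument is a direct specialization of Theorem~\ref{thm:dm}, so the plan is to instantiate that formula twice and then read off the ratio. We fix $m<m'$ and $N\geq 1$, and set $k=mm'N$. Since $m,m',N\geq 1$, we have $k\geq 1$, which is exactly the hypothesis of Theorem~\ref{thm:dm}. Theorem~\ref{thm:dm} holds for every finite positive integer $m$ (it rests only on Lemma~\ref{lem:main}), so we may apply it once with budget $m$ and once with budget $m'$ to the same word $w=a^{k}b$.

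First step: with budget $m$, Theorem~\ref{thm:dm} gives $d_m(w)=\lceil mm'N/m\rceil=\lceil m'N\rceil$. Because $m'N$ is an integer, the ceiling has no effect, and $d_m(w)=m'N$. Second step: with budget $m'$, the same reasoning gives $d_{m'}(w)=\lceil mm'N/m'\rceil=mN$. This is the point where choosing the exponent as the product $mm'N$ matters: it is divisible by both $m$ and $m'$, so neither ceiling rounds up.

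Third step: both values are positive integers, so the quotient is well defined. We compute
\[
\frac{d_m(w)}{d_{m'}(w)}=\frac{m'N}{mN}=\frac{m'}{m},
\]
and this is independent of $N$. For the final assertion, we fix $m$ and let $m'\to\infty$. The ratio $m'/m$ then exceeds any prescribed bound $C$ once $m'>Cm$. Each such $m'$ comes with an explicit witness word $a^{mm'N}b$, for instance with $N=1$. Hence no constant $C$ satisfies $d_m\leq C\,d_{m'}$ uniformly over all $m'>m$.

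There is essentially no obstacle here. The only point that needs care is confirming that the ceilings are exact and that the denominator $d_{m'}(w)=mN$ is nonzero. Both follow immediately from the divisibility of $mm'N$ by $m$ and $m'$ and from $m,N\geq1$. If one also wants the case $m'=\infty$, we can note that $d_\infty(w)=1$ by Lemma~\ref{lem:stab}, taking any $m''>|w|$ in Theorem~\ref{thm:dm}. This gives the even stronger separation $d_m(w)/d_\infty(w)=m'N$, but that case is not required for the statement as given.
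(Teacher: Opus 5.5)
Your proposal is correct and is essentially the paper's proof: you instantiate Theorem~\ref{thm:dm} with $k=mm'N$ at budgets $m$ and $m'$, use divisibility to make both ceilings exact, and read off the $N$-independent ratio $m'/m$. Your side remark that $d_\infty(w)=1$ (since $mm'N/m''<1$ for any $m''>|w|$) is also correct, though the statement does not need it.
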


\begin{proof}
By Theorem~\ref{thm:dm}, $d_m(w)=\lceil mm'N/m\rceil = m'N$ and $d_{m'}(w)=\lceil mm'N/m'\rceil=mN$, both divisions without rounding. The ratio follows directly, and does not depend on $N$; it is exactly $m'/m$, which is unbounded as $m'\to\infty$ with $m$ fixed.
\end{proof}

Corollary~\ref{cor:speedsep} provides a completely explicit and precise version of the intuitively expected fact that varying $m$ causes sorting to happen at essentially different speeds, in the sense that \emph{not only} does there exist \emph{some} word that displays an arbitrary gap in speed between $s_m$ and $s_{m'}$ \emph{but moreover}, for any desired ratio $m'/m$ there exists an explicit construction of words displaying this exact ratio.

\section{The operators do not commute}
\label{sec:comm}

\begin{proposition}
\label{prop:noncomm}
$s_1$ and $s_2$ do not commute as operators on words. Specifically, for $w=2231$,
$$s_2(s_1(w)) \;=\; 1223 \;\neq\; 2123 \;=\; s_1(s_2(w)).$$
\end{proposition}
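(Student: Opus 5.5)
The plan is a direct simulation of the stack process of Definition~\ref{def:sm}, carried out four times: compute $s_1(w)$ and $s_2(w)$ for $w=2231$, then apply the other operator to each result. Since everything is finite and explicit, no earlier lemma is really needed. Remark~\ref{rem:conservation} gives a sanity check: every intermediate word must be a rearrangement of the multiset $\{1,2,2,3\}$.

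\emph{Step 1: $s_1(2231)$.} The first $2$ is pushed by rule (i). The second $2$ meets top $2$ with run $r=1=m$, so rule (iii) pops a $2$; the stack is then empty, so this $2$ is pushed. The $3$ meets top $2<3$, so rule (iv) pops it, giving output $22$, and the $3$ is pushed. The $1$ is pushed by rule (ii). Draining outputs $1$ then $3$, so $s_1(2231)=2213$.

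\emph{Step 2: $s_2(2231)$.} Both $2$'s are pushed, since the run reaches $2=m$ only after the second push. The $3$ pops both $2$'s by rule (iv) and is pushed. The $1$ is pushed, and the drain outputs $13$. Hence $s_2(2231)=2213$ as well. This coincidence is worth noting: the non-commutation comes entirely from the second application.

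\emph{Step 3: the second passes.}
\begin{itemize}[leftmargin=1.6em]
\item For $s_2(2213)$: both $2$'s are pushed and then the $1$. The $3$ pops $1,2,2$ in that order and is pushed, and the drain gives $3$. Thus $s_2(s_1(w))=1223$.
\item For $s_1(2213)$: the second $2$ forces a pop of the first $2$ by rule (iii) with $m=1$, and is then pushed. The $1$ is pushed. The $3$ pops $1$ and then $2$, and the drain gives $3$. Thus $s_1(s_2(w))=2123$.
\end{itemize}
Since $1223\neq 2123$, the operators do not commute.

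The only delicate point is the correct handling of rule (iii). After a forced pop, the pending letter must be re-examined against the new top (here, an empty stack). Also, with $m=2$ a run of length $1$ still admits a push. I would double-check exactly these steps, since they are where $s_1$ and $s_2$ diverge on the second pass.
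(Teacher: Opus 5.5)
Your proposal is correct and takes the same approach as the paper: a direct trace of all four passes under Definition~\ref{def:sm}, giving $s_1(w)=s_2(w)=2213$ and then $s_2(2213)=1223 \neq 2123 = s_1(2213)$. Your handling of the rule (iii) re-check after a forced pop agrees with the paper's trace step for step.
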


\begin{proof}
Let us follow all four passes through explicitly, according to the Definition~\ref{def:sm} rules, writing the stack top-first.

\underline{For the first calculation of $s_1(w)$ on $w=2231$} push $2$ and now the stack will have $2$. The second $2$ ties and the run is $1$ which is equal to $m$ thus we pop the $2$ out to the output and put the new $2$ in the stack. Since the stack is now empty we push the third number $3$. We see that the top element is smaller than $3$ thus we pop the $2$ to the output and push the number $3$. Now we push $1$ since the stack's top element is bigger than $1$. We now pop the whole stack, $1,3$, which gives us $2,2,1,3$. Thus $s_1(w)=2213$.

\underline{Now calculating $s_2(2213)$}. We push $2$. The second $2$ ties but the run is $1$ which is smaller than $2$ thus we push the $2$. Now our stack will be $2,2$. Then we push the $1$ since the top element is bigger than $1$. Our stack is $1,2,2$. The number $3$ is bigger than the top element $1$ thus we pop it from the stack. Also the second $2$ is less than $3$ thus we pop it. The third $2$ is also less than $3$ thus we pop it. So the stack is now empty and we push $3$. The last step is to pop the stack. We have $1,2,2,3$ thus $s_2(s_1(w))=1223$.

\underline{Calculation of $s_2(w)$ on $w=2231$}. We start pushing $2$. The second $2$ ties but the run of $1$ is less than $2$ thus we push $2$. The stack is now $2,2$. The next number $3$ is greater than the stack's top element $2$ thus we pop $2$. The same is for the other $2$. We now push $3$ since the stack is empty. The number $1$ is less than the stack's top element $3$ thus we push it. The stack is $1,3$ now and we drain it. We get $2,2,1,3$. Thus $s_2(w)=2213$.

\underline{Now the calculation of $s_1(2213)$}. We push the first $2$. The second $2$ ties but the run is $1$ which is equal to $m$ thus we pop $2$ and push the $2$. The stack is now $2$. The number $1$ is less than the top element of the stack $2$ thus we push $1$. Our stack is now $1,2$. The number $3$ is greater than $1$ thus we pop $1$. The number $2$ is less than $3$ thus we pop $2$. The stack is empty thus we push $3$ there. We now pop the stack thus getting $2,1,2,3$. Thus $s_1(s_2(w))=2123$.

Since $1223\neq 2123$, we conclude $s_2(s_1(w))\neq s_1(s_2(w))$.
\end{proof}

\begin{proposition}[Minimality of the length-4 example]
\label{prop:minimality}
For every $m,m'\ge 1$, the maps $s_m$ and $s_{m'}$ commute on all words of length
$\le 3$. Thus length $4$ is the smallest length at which any non-commuting pair
$(s_m,s_{m'})$ can occur, and the word $w=2231$ from Proposition~\ref{prop:noncomm}
is a minimal-length witness.
\end{proposition}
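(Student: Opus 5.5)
The plan is to show that on words of length $\le 3$ the operator $s_m$ does not depend on $m$ except in a few explicit cases, and then to check those cases by hand. First, reduce the range of $m$. Rule (iii) can only force a pop when the stack holds a run of $m$ equal letters and another copy of that letter arrives. That needs at least $m+1$ copies of the letter. So for $|w|\le 3$ and $m\ge 3$ we get $s_m(w)=s_\infty(w)=s_3(w)$, which is the argument of Lemma~\ref{lem:stab} with the sharper count $r\le n-1$. Every iterate of $w$ also has length $\le 3$ by Remark~\ref{rem:conservation}. Hence it suffices to prove that $s_1$, $s_2$, $s_3$ pairwise commute on words of length $\le 3$.

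Next, sort out which words make the operators differ. If every letter of $w$ is distinct, rule (iii) never fires, so $s_1(w)=s_2(w)=s_3(w)$. The same holds for every iterate, since the multiset of letters is preserved. So the operators commute trivially on such words. Commutation is also unaffected by relabelling letters order-preservingly. The remaining words have length $2$ or $3$ and use at most two distinct values $1<2$:
\[
11,\ 111,\ 112,\ 121,\ 211,\ 122,\ 212,\ 221.
\]
For constant words and for the sorted words $11,111,112,122$, the stack process never outputs anything out of order. I expect every $s_m$ to fix these words, which will be checked by a one-line simulation. For a sorted input the only possible difference is when a tie pops, and a tie pop outputs an equal letter, so the output is still sorted and equal to the input.

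The remaining words are $121$, $211$, $212$ and $221$. For each I will compute $s_1,s_2,s_3$ directly from Definition~\ref{def:sm}, or more quickly via Lemma~\ref{lem:main}, since $211=a^2b$, $212=aba$ and $221$ is handled below. I will then verify $s_i(s_j(w))=s_j(s_i(w))$.

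For $a^pba^q$ with $p+q\le 2$, Lemma~\ref{lem:main} gives $s_m(a^pba^q)=a^{\max(p-m,0)}ba^{\min(p,m)+q}$. So the $m$-dependence appears only for $211$: $s_1(211)=121$ while $s_2(211)=s_3(211)=112$. Here I will check that $s_2(121)=112=s_1(112)$, which together with Lemma~\ref{lem:main} for $121$ gives commutation.

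The word $221$, which is $b^2a$ with the small letter doubled, and $121$ are not covered by the lemma. I will simulate them directly. In $221$ the letters $2,2$ enter the stack and then $1$ is pushed, so $m$ matters only through whether the first $2$ pops. I expect the outputs to be $212$ and $122$, both of which are then sorted, or nearly so.

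The main obstacle is making sure this case list is exhaustive and that no iterate leaves the verified set. I will handle this by noting that each output is again a word in the same finite list, a closed set of eight words, so a small table of $s_1,s_2,s_3$ on these eight words finishes the proof.

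Finally, minimality follows from combining this with Proposition~\ref{prop:noncomm}.
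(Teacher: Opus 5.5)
Your overall plan is the paper's own. You discard words with pairwise distinct letters and sorted words, normalize to the alphabet $\{1,2\}$, and check the four unsorted words $121$, $211$, $212$, $221$ by hand. Your reduction to $m\le 3$ is a correct and harmless addition, and so is the observation that your eight words form a set closed under every $s_m$.

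However, the case computations you sketch have $211$ and $221$ interchanged, and as written they assert false values.
\begin{itemize}
\item The word $211$ is $ab^2$ with $a=2>b=1$, not $a^2b$. Its repeated letter is the \emph{smaller} one, so Lemma~\ref{lem:main} does not apply. Simulating with $m=1$: the second $1$ meets a run of length $1=m$ and pops the first $1$ to the output. It is then pushed onto the $2$, and draining the stack adds $1,2$, so $s_1(211)=112$. For $m\ge2$ nothing pops before the drain, and again $s_m(211)=112$. So your value $s_1(211)=121$ is false, and $211$ is not $m$-dependent at all.
\item Conversely, $221$ is not ``$b^2a$ with the small letter doubled.'' The doubled letter $2$ is the larger one, so $221=a^2b$ is exactly the case $p=2$, $q=0$ of Lemma~\ref{lem:main}. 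This gives $s_1(221)=212$ and $s_m(221)=122$ for $m\ge2$, matching your simulation.
\item You first invoke Lemma~\ref{lem:main} for $121$ and then say $121$ is not covered. Only the latter is right; direct simulation gives $s_m(121)=112$ for every $m$.
\end{itemize}

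After this correction, $221$ is the only word on which the operators disagree. The one nontrivial commutation check must be carried out there, not dismissed with ``sorted, or nearly so'' ($212$ is not sorted). For $m'\ge2$, $s_{m'}(s_1(221))=s_{m'}(212)=122$ by Lemma~\ref{lem:main} applied to $212=aba$. Meanwhile $s_1(s_{m'}(221))=s_1(122)=122$, because sorted words are fixed.

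For $121$, $211$, $212$, the compositions agree because the common images $112$ and $122$ are sorted, hence fixed by every $s_m$. Agreement of $s_m$ and $s_{m'}$ at $w$ alone would not suffice; one also needs agreement at the image. Your plan of tabulating compositions directly on the closed set of eight words checks this automatically. With the table filled in using these corrected values, your argument coincides with the paper's proof.
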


\begin{proof}
Take arbitrary $m,m'\ge 1$ and an arbitrary word $w$ of length $|w|\le 3$. If $w$ has
no repeating letters, it is a permutation, and $s_m(w)=s_{m'}(w)=s_1(w)$ according to
Remark~\ref{rem:permsagree}; then
$$s_m(s_{m'}(w)) = s_m(s_1(w)) = s_1(s_1(w)) = s_{m'}(s_1(w)) = s_{m'}(s_m(w)),$$
and the maps $s_m,s_{m'}$ commute trivially on $w$.

If $w$ is already sorted, by Remark~\ref{rem:conservation} and by a direct verification
of Definition~\ref{def:sm} (incoming letter $x$ is always $\ge$ the current top when
processing a weakly increasing word, so either (i) or one of push branches of (ii)/(iii)
fires at every step, and no letter is ever popped before the drain) $s_m$ fixes $w$ for
every $m\ge1$; thus $s_m(w)=s_{m'}(w)=w$, and the maps $s_m$ and $s_{m'}$ commute
again trivially.

It remains to consider words $w$ of length $3$ having a repeated letter that are not yet
sorted. As the operator $s_m$ depends only on the order of the letters of $w$, without loss
of generality we can assume that $w$ is over the two-letter alphabet $\{1,2\}$. Direct enumeration
leaves just four possibilities after excluding the above cases: 
\[
w=(2,1,2),\qquad w=(1,2,1),\qquad w=(2,1,1),\qquad w=(2,2,1),
\]
and all other two-letter words with a repeated letter of length $3$ are already sorted.
Let us compute $s_m(w)$ for each of them, using Lemma~\ref{lem:main} with $a=2>b=1$:
\begin{enumerate}
\item $w=(2,1,2)=a^1ba^1$: for every $m\ge1$, $\max(1-m,0)=0$ and $\min(1,m)+1=2$, so
\[
s_m(w) = (1,2,2)\quad\text{for every } m\ge1.
\]
\item $w=(1,2,1)$: in this word, the repeated letter ($1$) is the \emph{smaller} one,
so Lemma~\ref{lem:main} is not applicable (as it requires $a>b$ for the repeated
letter $a$); from Definition~\ref{def:sm} we directly obtain
\[
s_m(w) = (1,1,2) \quad\text{for every } m\ge 1.
\]
\item $w=(2,1,1)$: here too the repeated letter ($1$) is the \emph{smaller} one, so
Lemma~\ref{lem:main} is not applicable; from Definition~\ref{def:sm} we directly
obtain
\[
s_m(w) = (1,1,2) \quad\text{for every } m\ge 1.
\]
\item $w=(2,2,1)=a^2b$: by Lemma~\ref{lem:main}, $\max(2-m,0)=1$ for $m=1$ and $0$ for
$m\ge2$, while $\min(2,m)=2$ for $m\ge2$ and $=1$ for $m=1$; thus
\[
s_1(w) = (2,1,2), \qquad s_m(w) = (1,2,2) \ \text{ for every } m\ge 2.
\]
\end{enumerate}
In the first three shapes, $s_m(w)$ does not depend on $m$ at all, so $s_m$ and
$s_{m'}$ trivially agree on $w$ and hence commute in composition.
In the fourth shape, $w=(2,2,1)$, if $m,m'\ge2$ then $s_m(w)=s_{m'}(w)=(1,2,2)$, and
$(1,2,2)$ is sorted, so commutativity is immediate. The only remaining case is
$m=1$ and $m'\ge2$ (the other way around is symmetric). Here
\[
s_{m'}\big(s_1(w)\big) = s_{m'}\big((2,1,2)\big) = (1,2,2)
\]
by the first bullet above (applied with $m'$ in place of $m$, since $(2,1,2)=a^1ba^1$
gives output $(1,2,2)$ for \emph{every} value of the parameter), while
\[
s_1\big(s_{m'}(w)\big) = s_1\big((1,2,2)\big) = (1,2,2),
\]
since $(1,2,2)$ is already sorted. Thus $s_{m'}(s_1(w)) = s_1(s_{m'}(w))$.
This exhausts every word of length $\le 3$, proving $s_m$ and $s_{m'}$ commute on all
such words for every $m,m'\ge1$. And since Proposition~\ref{prop:noncomm} exhibits a
non-commuting pair at length $4$, that length is minimal.
\end{proof}

\section{Speed of non-monotone sorting}
\label{sec:mono}

By Corollary~\ref{cor:basicmono}, on words with two different letters, a larger value of $m$ always sorts faster. One may wish that this property holds in general -- namely, that $d_m(w)$ is non-increasing in $m$ for each fixed $w$. This is not true, and we give an explicit counterexample.

\begin{proposition}
\label{prop:nonmono}
For the word $w=3444241$ (namely, the word with letters $3,4,4,4,2,4,1$ in that order),
$$d_2(w) = 3 \qquad\text{and}\qquad d_3(w) = 4,$$
so $d_2(w)<d_3(w)$ despite the fact that $2<3$: on this particular word, the operator with \emph{more} space for stacking is slower.
\end{proposition}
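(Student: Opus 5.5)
The plan is to prove Proposition~\ref{prop:nonmono} by direct simulation. I will run Definition~\ref{def:sm} by hand on $w=3444241$ and on each of its iterates, once for $m=2$ and once for $m=3$. I will record the stack (top-first) and the output after every input letter, in the same style as the proof of Proposition~\ref{prop:noncomm}. Two facts confine the computation to a few passes. First, by Remark~\ref{rem:conservation} every iterate is a rearrangement of the multiset $\{1,2,3,4,4,4,4\}$. Second, as observed in the proof of Proposition~\ref{prop:minimality}, a sorted word is fixed by every $s_m$. So $d_m(w)$ is the first $t$ for which $s_m^t(w)=1234444$, and only the passes up to that point need to be computed.

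For $m=2$ the chain should be
\begin{align*}
3444241 &\xmapsto{s_2} 3424144 \xmapsto{s_2} 3214444 \xmapsto{s_2} 1234444 .
\end{align*}
The crucial step is the third $4$ in the first pass. Rule (iii) fires with $r=2=m$, so one $4$ is emitted early and the run is restored to $2$. When the $4$ after the $2$ arrives, it pops the $2$ and then meets a full run of two $4$'s, so another $4$ is emitted before the final $1$ is read. Neither $3424144$ nor $3214444$ is sorted (each has a descent), hence $d_2(w)=3$.

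For $m=3$ the chain should be
\begin{align*}
3444241 &\xmapsto{s_3} 3241444 \xmapsto{s_3} 2314444 \xmapsto{s_3} 2134444 \xmapsto{s_3} 1234444 .
\end{align*}
Here the three initial $4$'s all fit on the stack. The $4$ following the $2$ first pops the $2$ and then meets a full run of three $4$'s, so exactly one $4$ is ejected; it lands in front of the $1$ and leaves the $1$ trapped behind a $4$. The intermediate words $3241444$, $2314444$ and $2134444$ each contain a descent, which gives $d_3(w)=4$. Comparing the two results yields $d_2(w)=3<4=d_3(w)$.

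The main obstacle is bookkeeping rather than ideas. Rule (iii) at a full run pops one copy and then re-examines the same input letter, so each such step produces one output letter while leaving the run length unchanged. I will write out that sub-step explicitly each time it occurs, as in Stage~1 of the proof of Lemma~\ref{lem:main}, so that no $4$ is dropped or duplicated. As a final check I will confirm letter counts against Remark~\ref{rem:conservation} after every pass.
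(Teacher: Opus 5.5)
Your proposal is correct and follows the paper's proof: a direct pass-by-pass simulation of Definition~\ref{def:sm}. It yields exactly the paper's chains $3444241\to3424144\to3214444\to1234444$ for $m=2$ and $3444241\to3241444\to2314444\to2134444\to1234444$ for $m=3$, and I have checked each of these passes. Your plan to trace every pass explicitly, including the $m=3$ passes, with the cap-triggered pop-and-re-push of rule (iii) written out, is if anything more complete than the paper, which details only the first $s_2$ pass.
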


\begin{proof}
We follow both sequences of passes explicitly from Definition~\ref{def:sm}.

\emph{Passes of $s_2$.} 
$$3444241 \xrightarrow{\;s_2\;} 3424144 \xrightarrow{\;s_2\;} 3214444 \xrightarrow{\;s_2\;} 1234444.$$
In the first pass: push $3$; then $4$ pops it ($3<4$), push $4$; then $4$ ties (run $1<2$), push (run $2$); then $4$ ties with run $=2=m$, pop one $4$ to output and re-push (run $=2$, output $4$); then $2$ is pushed on top of the run ($4>2$); then $4$ pops the $2$ ($2<4$) and ties with the run ($=2=m$), pops one $4$ to output and re-pushes (output $4$, run $=2$); finally push $1$ on top. Outputting the remaining contents of the stack ($1,4,4$) results in the output sequence $3,4,2,4,1,4,4=3424144$. Repeating the same sequence of operations on $3424144$ yields $3214444$, and on $3214444$ gives $1234444$, which is sorted. Therefore $d_2(w)=3$.

\emph{Passes of $s_3$.}
$$3444241 \xrightarrow{\;s_3\;} 3241444 \xrightarrow{\;s_3\;} 2314444 \xrightarrow{\;s_3\;} 2134444 \xrightarrow{\;s_3\;} 1234444.$$
The mechanics are the same: with capacity $3$ instead of $2$, the run of $4$'s absorbs one additional copy before leaking, which changes exactly which letters are popped and when, and one can check by tracing the sequence of operations (completely analogously to the computation for $s_2$, now with cap $3$) that after the first pass the word is $3241444$, after the second it is $2314444$, after the third it is $2134444$, and after the fourth it is $1234444$, which is sorted, although $2134444$ is not ($2>1$). Therefore $d_3(w)=4$.
\end{proof}

\begin{remark}
Experimental computations (random sampling on alphabet sizes $\leq 6$ and word lengths $\leq 16$) suggest that failures of monotonicity like this, as well as the failures of commutativity in Section~\ref{sec:comm}, are common and widespread, and not sporadic isolated examples; we do not use this observation in any proof, and mention it only as a context for Proposition~\ref{prop:nonmono}.
\end{remark}

From Proposition~\ref{prop:nonmono}, we see that $(s_m)_{m\geq1}$ along with $s_\infty$ is \emph{not} linearly ordered with respect to their sorting times, and moreover that this failure already occurs strictly \emph{among finite values} $m,m'\geq2$, i.e. entirely within the new interpolation range introduced in this paper, without reference to the hare operator $s_\infty$ at all: the function $m\mapsto d_m(w)$ increases as well as decreases. This is the primary source of non-monotonicity we identify, and it is logically independent of the separate question -- studied by Defant and Kravitz~\cite{defant-kravitz} and revisited from our point of view in Section~\ref{sec:sandwich} -- of whether the two \emph{extreme} sorting functions $s_1$ and $s_\infty$ sandwich the entire family.

\section{Characterization of one-pass sortable words through recursions}
\label{sec:pattern}

We now turn our attention to the question of \emph{which} words are one-pass sorted under $s_m$. For classical permutations with no repetitions, this is settled by the following theorem by West:

\begin{theorem}[Knuth-West, {\cite{knuth,west}}]
\label{thm:west}
$\pi$ satisfies $s_1(\pi)=\mathrm{id}$ if and only if $\pi$ avoids the pattern $231$; equivalently, writing $\pi=L\,n\,R$ where $n$ is the maximum entry, $\pi$ is one-pass sortable if and only if every entry of $L$ is less than every entry of $R$ and $L,R$ are recursively one-pass sortable.
\end{theorem}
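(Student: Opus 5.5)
I would prove the recursive form first and then deduce the pattern form, working directly with the stack process of Definition~\ref{def:sm} for $m=1$ on a permutation. Since a permutation has no repeated letters, rule (iii) never fires, so $s_1$ here is exactly West's stack. The basic observation concerns the maximum entry $n$. When $n$ arrives, rule (iv) pops everything on the stack, because every other entry is smaller. So after $n$ is pushed, the output is exactly $s_1(L)$. Moreover $n$ sits at the bottom of the stack and is never popped until the drain, since no later letter exceeds it. Hence $n$ behaves as an empty-stack floor for the processing of $R$, and I would establish the decomposition
$$s_1(L\,n\,R)=s_1(L)\,s_1(R)\,n .$$
This is checked by induction on the steps of processing $R$: the stack contents above $n$ evolve exactly as they would from an empty stack on input $R$.

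Given this identity, the recursive characterization follows directly. If $s_1(\pi)=\mathrm{id}$, then $s_1(L)\,s_1(R)$ must be increasing. That forces each of $s_1(L)$ and $s_1(R)$ to be increasing, so $L$ and $R$ are one-pass sortable. It also forces every entry of $L$ to be less than every entry of $R$, since $s_1(L)$ and $s_1(R)$ are rearrangements of $L$ and $R$ (Remark~\ref{rem:conservation}). The converse is immediate from the same identity.

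For the equivalence with $231$-avoidance, I would induct on $n$ using the decomposition. Suppose some entry of $L$ exceeds some entry of $R$, say $x\in L$ and $y\in R$ with $x>y$. Then $x,n,y$ is an occurrence of $231$. Conversely, suppose $\pi$ contains $231$ at entries $b,c,a$ with $a<b<c$. Either the occurrence lies entirely in $L$ or entirely in $R$, which contradicts recursive sortability by the inductive hypothesis. Or it straddles $n$, in which case an entry of $L$ exceeds an entry of $R$; replacing $c$ by $n$ if necessary covers this case. Hence $231$-avoidance is equivalent to the recursive condition.

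The main obstacle I expect is the decomposition step: making precise that $n$ acts as a floor throughout the processing of $R$ and that the final drain emits $n$ last. Everything after that is bookkeeping.
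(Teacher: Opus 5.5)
Your proof is correct. The paper does not prove this theorem itself; it quotes it from Knuth and West. Its recursive half is, however, effectively re-derived later with the same ingredients you use:
\begin{itemize}
\item Lemma~\ref{lem:flush} is your observation that the arrival of $n$ empties the stack in exactly the order the final drain would. This is the point that justifies ``the output at that moment is $s_1(L)$'', and you should state it.
\item Lemma~\ref{lem:burial} is your step that $n$ acts as an empty-stack floor for $R$.
\item Theorem~\ref{thm:block} with $\ell=1$ and $c_1=1$ (so $\lambda_1=0$, $r_1=1$) is precisely your identity $s_1(L\,n\,R)=s_1(L)\,s_1(R)\,n$. Corollary~\ref{cor:recover} then reads off the recursive criterion just as you do.
\end{itemize}

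The equivalence with $231$-avoidance is not argued anywhere in the paper, and your induction supplies it correctly. In the straddling case, the first entry of the occurrence must lie in $L$ and the last in $R$, because $n$ can only play the role of the `3'. That alone contradicts $L<R$, so ``replacing $c$ by $n$'' is unnecessary. For the forward direction, say explicitly that $L$ and $R$ inherit $231$-avoidance as subsequences, after standardizing their values, before you invoke the inductive hypothesis.
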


Since permutations do not have repeated entries, ties cannot happen, and rule (iii) in Definition~\ref{def:sm} does not come into play:

\begin{remark}
\label{rem:permsagree}
For all $m\geq 1$, $s_m$ on permutations coincides with $s_1$; hence Theorem~\ref{thm:west} settles our question completely in the special case when $w$ has distinct letters.
\end{remark}

From Proposition~\ref{prop:nonmono}, we see that $(s_m)_{m\geq1}$ along with $s_\infty$ is \emph{not} linearly ordered with respect to their sorting times, and moreover that this failure already occurs strictly \emph{among finite values} $m,m'\geq2$, i.e. entirely within the new interpolation range introduced in this paper, without reference to the hare operator $s_\infty$ at all: the function $m\mapsto d_m(w)$ increases as well as decreases. This is the primary source of non-monotonicity we identify, and it is logically independent of the separate question -- studied by Defant and Kravitz~\cite{defant-kravitz} and revisited from our point of view in Section~\ref{sec:sandwich} -- of whether the two \emph{extreme} sorting functions $s_1$ and $s_\infty$ sandwich the entire family.

\subsection{Two structural lemmas}

\begin{lemma}[Flush]
\label{lem:flush}
If the next letter $x$ of the input exceeds all letters currently on the stack, then processing $x$ pops all the stack letters out in weakly increasing order (as they are arranged), and then pushes $x$.
\end{lemma}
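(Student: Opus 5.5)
The plan is to establish two facts separately: first, that the stack is always weakly increasing from top to bottom; second, that when $x$ exceeds everything on the stack, only rule (iv) can fire until the stack is empty. Together these give exactly the claim: every stacked letter is popped, the pops come out in weakly increasing order, and then $x$ is pushed.

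\textbf{Stack invariant.} I would first prove, by induction on the number of operations in Definition~\ref{def:sm}, that reading the stack from top to bottom always gives a weakly increasing sequence. The invariant holds for the empty stack. Popping removes the top entry, so it preserves the invariant. A push occurs in only three situations:
\begin{itemize}
\item under rule (i), the stack is empty;
\item under rule (ii), the new letter $x$ is smaller than the current top $y$;
\item under the push branch of rule (iii), $x$ equals $y$.
\end{itemize}
In every case the new top is $\le$ the old top, so the invariant is preserved. Thus the top is always a minimal element of the stack, and the entries are arranged in weakly increasing order from top to bottom.

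\textbf{The flush.} Now suppose the incoming letter $x$ is strictly greater than every letter on the stack. As long as the stack is nonempty, its top $y$ satisfies $y<x$. Rules (ii) and (iii) require $y>x$ or $y=x$, so neither applies, and rule (iv) fires: $y$ is popped and $x$ is re-checked. After the pop, $x$ still exceeds every remaining stack letter, so the same reasoning applies again. Induction on the stack size therefore shows that the entire stack is popped, one letter at a time from the top. Once the stack is empty, rule (i) pushes $x$.

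By the invariant, the popped sequence is the stack read from top to bottom, which is weakly increasing. Equal letters leave in sequence, and none of this involves the capacity $m$.

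\textbf{Main point of care.} The only subtle step is checking that the run-length cap in rule (iii) cannot interfere. Rule (iii) is triggered only when $y=x$, and this is excluded here by strict domination. Its pop branch also never breaks the invariant, since pops only remove the top. If the statement is meant to cover a weak inequality ($x\ge$ all stack letters), this step would need revision: once the top equals $x$, rule (iii) would instead push $x$ onto the run, or pop only while the run length equals $m$. So I would read ``exceeds'' as strict.
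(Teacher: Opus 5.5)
Your proof is correct and follows the same argument as the paper: since every stacked letter is strictly below $x$, only rule (iv) can fire until the stack is empty, and then rule (i) pushes $x$. Your one addition is the invariant that the stack is weakly increasing from top to bottom, which justifies the ``weakly increasing order'' clause that the paper's proof leaves implicit in the phrase ``as they are arranged''.
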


\begin{proof}
As long as the stack is nonempty, the stack top $y$ satisfies $y<x$, and by rule (iv) is popped from the stack; the pending letter $x$ is then compared to the new top of the stack, and so on until the stack is exhausted. After that, rule (i) pushes $x$.
\end{proof}

\begin{lemma}[Burial]
\label{lem:burial}
Suppose that during the processing, the stack has $y$ as the top element and that the next input block $u$ to be processed consists entirely of letters strictly less than $y$. Then processing $u$ leads to exactly the same series of actions as the processing of $u$ with an initially empty stack; furthermore, $y$ and all the letters below it remain untouched. The state of the stack after the processing of $u$ is the final stack produced by processing $u$ separately, on top of $y$.
\end{lemma}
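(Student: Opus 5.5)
I would prove the Burial Lemma by induction on the number of letters of $u$ processed, comparing step by step two runs of Definition~\ref{def:sm}. Run A processes $u$ on the actual stack, which has the form $S_0 = T\,y\,\beta$: here $T$ is empty at the start, $y$ is on top of the buried part, and $\beta$ is everything below $y$. Run B processes $u$ from an empty stack. The invariant, maintained after every elementary action (a push, or a pop that is followed by re-checking the same pending letter), has two parts. First, the stack in run A equals the stack $T$ of run B with $y\beta$ lying underneath it. Second, both runs have produced the same output so far.

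The inductive step is a case analysis on the pending letter $x\in u$, which satisfies $x<y$, and on the top of $T$.

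\emph{Case $T$ nonempty with top $t$.} Every letter of $T$ came from $u$ and so is $<y$. Rules (ii) and (iv) depend only on comparing $x$ with $t$, so they fire identically in both runs. Rule (iii) depends on the maximal run $r$ of letters equal to $t$ at the top of the stack. In run A that run could in principle extend past the bottom of $T$ into $y\beta$. It cannot, because the letter immediately below $T$ is $y$, and $y>t$ forces $y\neq t$. Hence $r$ is the same in both runs, and rule (iii) also fires identically.

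\emph{Case $T$ empty.} Run B applies rule (i) and pushes $x$. Run A sees top $y>x$ and applies rule (ii), which also pushes $x$. So both runs push, and the invariant is preserved.

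In every case the popped letter, if any, comes from $T$ and never from $y\beta$. The outputs therefore coincide, and $y$ together with everything below it is never touched. This establishes the "same series of actions" claim.

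After all of $u$ is consumed, the invariant gives the final statement: the stack in run A is the final stack of run B, before its drain, sitting on top of $y$. I would stress that "final stack" here means the stack before draining, since in run A no drain occurs at that point.

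The main obstacle is the subtle point in rule (iii): one must rule out the run of equal letters reaching down into the buried part. This is exactly where the strict inequality $u<y$ is needed. Everything else is a routine mechanical comparison.
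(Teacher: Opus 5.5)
Your proof is correct and follows essentially the same route as the paper's: induction on the letters of $u$, comparing the embedded and standalone runs, where a nonempty upper stack gives both runs the same top and an empty one makes rules (i) and (ii) both prescribe a push. You also check explicitly that the tie-run in rule (iii) cannot extend past $T$ into $y$ (since $y>t$), and that ``final stack'' means the pre-drain stack; the paper's proof passes over both points quickly.
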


\begin{proof}
We prove this claim by induction on the number of letters consumed of $u$. In particular, by the hypothesis of the lemma, the "upper" stack (everything above $y$) is identical at every moment of time in both embedded and standalone processes. As long as the upper stack is nonempty, both processes will see the identical top and therefore make the same decision. As soon as the upper stack becomes empty, the stack top for the embedded process will be $y$ while for the standalone process it will be an empty stack; however, since all the letters of $u$ are strictly smaller than $y$, rule (i) (empty $\to$ push) and rule (ii) ($y>x\to$ push) prescribe the \emph{same} action -- push $x$ -- and again the processes coincide. Since all letters of $u$ are $\leq y$, rule (iii)/(iv) will never be invoked against $y$, and $y$ will never be popped.
\end{proof}

\subsection{Block Decomposition Theorem}

Let $w$ be a word and $M = \max(w)$. We study \emph{maximal runs of $M$} in $w$, i.e.
$$w \;=\; G_0\,M^{c_1}\,G_1\,M^{c_2}\,G_2\,\cdots\,M^{c_\ell}\,G_\ell,$$
where all letters in $G_i$ are smaller than $M$; maximal runs ensure $G_1,\ldots,G_{\ell-1} \neq \varnothing$, but $G_0$ and $G_\ell$ could be empty. Write
$$r_0=0, \qquad r_i = \min(r_{i-1}+c_i,\,m), \qquad \lambda_i = \max(r_{i-1}+c_i-m,\,0) \quad (1\leq i\leq \ell).$$

\begin{theorem}[Block Decomposition Theorem]
\label{thm:block}
Using notation defined above,
$$s_m(w) \;=\; s_m(G_0)\;M^{\lambda_1}\;s_m(G_1)\;M^{\lambda_2}\;s_m(G_2)\;\cdots\;M^{\lambda_\ell}\;s_m(G_\ell)\;M^{r_\ell}.$$
\end{theorem}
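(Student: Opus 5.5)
We will prove the formula by tracking the stack through the blocks of the decomposition, maintaining the invariant that after $G_0M^{c_1}G_1\cdots M^{c_i}$ has been read, the output so far is
\[
s_m(G_0)\,M^{\lambda_1}\,s_m(G_1)\cdots s_m(G_{i-1})\,M^{\lambda_i},
\]
and the stack consists \emph{exactly} of a pure run $M^{r_i}$ (with $r_0=0$ meaning empty stack). We then separately handle each $G_i$ and the final drain. The proof is an induction on $i$, with two alternating phases per step.

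\emph{Phase $G_{i}$ (the small letters).} Suppose the stack is $M^{r_i}$ and we now read $G_i$, all of whose letters are $<M$. If $r_i\ge1$, the top is $M$, strictly bigger than every letter of $G_i$, so Lemma~\ref{lem:burial} applies: the processing of $G_i$ coincides action by action with the standalone processing of $G_i$ on an empty stack, the run $M^{r_i}$ stays untouched, and the output emitted during this phase is exactly the portion of $s_m(G_i)$ produced before its drain. If $r_i=0$ (only for $i=0$) this is trivially the standalone run. At the end of reading $G_i$ the stack is (standalone final stack of $G_i$) on top of $M^{r_i}$.

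\emph{Phase $M^{c_{i+1}}$.} The first $M$ is at least every stack letter. I would use a slight variant of Lemma~\ref{lem:flush}: the first $M$ strictly exceeds every letter of the standalone stack of $G_i$, so it pops that upper part in top-to-bottom order, which is precisely the drain of standalone $G_i$; hence the output contributed so far in this phase completes $s_m(G_i)$. Then the pending $M$ meets either the empty stack or the run $M^{r_i}$. From here the computation is exactly Stage 1/Stage 3 of Lemma~\ref{lem:main}: starting from a run of size $r_i\le m$, the $c_{i+1}$ copies of $M$ first fill the run up to $m$ without output, and each further copy causes exactly one $M$ to be output with run size unchanged. So the phase outputs $M^{\max(r_i+c_{i+1}-m,0)}=M^{\lambda_{i+1}}$ and leaves stack $M^{\min(r_i+c_{i+1},m)}=M^{r_{i+1}}$, re-establishing the invariant.

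\emph{Finish.} After $M^{c_\ell}$, reading $G_\ell$ buries as before, and the final drain outputs the standalone stack of $G_\ell$ (completing $s_m(G_\ell)$) followed by $M^{r_\ell}$. Concatenating gives the claimed formula. The main obstacle is the bookkeeping at the junction where the first $M$ of a run arrives: one must check carefully that the order of popping the upper stack equals the standalone drain order (both are top-to-bottom), and that the tie rule (iii) for $M$ only ever sees the $M$-run, whose length is $r_i$ precisely because Lemma~\ref{lem:burial} guaranteed nothing disturbed it. I would also note the edge cases $G_0=\varnothing$ or $G_\ell=\varnothing$, where $s_m(\varnothing)=\varnothing$ and the argument goes through unchanged.
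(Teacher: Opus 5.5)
Your proof is correct and takes essentially the same route as the paper. Both arguments apply Lemma~\ref{lem:burial} to each $G_i$, let the first incoming $M$ flush the leftover $G_i$-stack, and then reuse the Stage~1/Stage~3 run-merging computation of Lemma~\ref{lem:main}, inducting over the runs of $M$. Your forward invariant, placed right after $M^{c_i}$ is read (stack exactly $M^{r_i}$), also makes explicit the flush of the residual $G_{\ell-1}$-stack, which the paper's induction on $\ell$ glosses over when it asserts a pure run $M^{r_{\ell-1}}$ immediately after the prefix $w'$.
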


\begin{proof}
Induction by $\ell$, the number of maximal runs of $M$.

\emph{Base case: $\ell=1$.} We write $w=G_0M^{c_1}G_1$. The machine acts on the prefix $G_0$ as on the standalone word: in both cases the machine scans the input completely, empties the stack completely and produces the output $s_m(G_0)$. In the word $w$ the next letter is $M$, which is strictly larger than all letters of $G_0$, since $M=\max(w)$ and $G_0$ does not contain $M$. By Lemma~\ref{lem:flush}, the same cascade of pops is performed, in the same order. Therefore, by the moment when the cascade ends, the machine emits output $s_m(G_0)$ and empties the stack.

Now the next $c_1$ copies of $M$ are put into this empty stack under the capacity-$m$ tie rule. This is the Phase~1 computation from the proof of Lemma~\ref{lem:main}, which produces the leaked output $M^{\lambda_1}$ ($\lambda_1 = \max(c_1 - m, 0)$) and the pure run $M^{r_1}$ ($r_1 = \min(c_1, m)$).

Now $G_1$ is a sequence of letters smaller than $M$ and the top of the stack is $M$. By Lemma~\ref{lem:burial}, scanning $G_1$ produces exactly the output $s_m(G_1)$ and does not affect the pure run $M^{r_1}$ in any way (the latter remains in place). Draining the run at the end of input produces $M^{r_1}$. Concatenating:
$$s_m(w) = s_m(G_0)\,M^{\lambda_1}\,s_m(G_1)\,M^{r_1},$$
and the required formula is obtained for $\ell=1$ (note that $r_\ell=r_1$ in this case).

\textbf{Inductive step}. Now we assume that the lemma is true for words with $\ell-1$ maximal runs of the maximum letter, $\ell\geq2$. We denote $w = w'\,M^{c_\ell}\,G_\ell$, where $w' = G_0M^{c_1}G_1\cdots M^{c_{\ell-1}}G_{\ell-1}$ has $\ell-1$ maximal runs of $M$ (i.e. $M = \max(w')$ too). By the inductive hypothesis, taken at the moment just before the last draining of the proof for $w'$ alone, it follows that after processing the prefix $w'$, we have the accumulated output 
$$s_m(G_0)\,M^{\lambda_1}\,s_m(G_1)\,\cdots\,M^{\lambda_{\ell-1}}\,s_m(G_{\ell-1})$$
and the stack is the pure run $M^{r_{\ell-1}}$. It is clear that the machine's behavior on the prefix $w'$ of $w$ depends only on $w'$ (as in the base case), and this means that this situation is achieved while processing $w$.

Then the next $c_\ell$ letters (which are all $M$) merge to this existing run under the capacity-$m$ rule, exactly as in Phase~3 of the proof of Lemma~\ref{lem:main} (merging into an existing run rather than from an empty one): this will leak $M^{\lambda_\ell}$ (with $\lambda_\ell=\max(r_{\ell-1}+c_\ell-m,0)$) and leaves a pure run $M^{r_\ell}$ (with $r_\ell=\min(r_{\ell-1}+c_\ell,m)$). Lastly $G_\ell$, formed from letters $<M$, is processed during this run just like $s_m(G_\ell)$ independently according to Lemma~\ref{lem:burial}, and finally the remaining run $M^{r_\ell}$ gets drained. This completes the induction step giving the desired formula for $\ell$.
\end{proof}

\begin{remark}
Lemma~\ref{lem:main} translates, literally, into Theorem~\ref{thm:block} by using the parameter values $\ell=2$, $G_0=G_2=\varepsilon$, $G_1=b$, $c_1=p$, $c_2=q$, and by simplifying $\lambda_2+r_2=r_1+q$.
\end{remark}

\subsection{Corollaries: the recursive criterion and the naive candidate}

As the integer $M$ is larger than any letter from any $G_i$, in Theorem~\ref{thm:block} a nonsorted position may occur only at two possible places: either right after the end of the $M^{\lambda_i}$ block if a subsequent $G_i$ block contains a letter $<M$ or right after the boundary between two consequent $G_{i-1}, G_i$ blocks if the second one was deleted because of $\lambda_i=0$. Thus we have the following statement.

\begin{corollary}[Recursive criterion]
\label{cor:criterion}
A word $w=G_0M^{c_1}G_1\cdots M^{c_\ell}G_\ell$ is one-pass $s_m$-sortable if and only if the following conditions hold:
\begin{enumerate}[label=(\arabic*)]
\item $r_{i-1}+c_i \leq m$ for every $i$ such that $G_i\neq\varepsilon$ (there is no capacity overflow at every boundary that is followed by smaller letters);
\item $\max(G_{i-1}) \leq \min(G_i)$ for every pair of consequent blocks that became adjacent due to condition (1);
\item each $G_i$ ($0\leq i\leq \ell$) is one-pass $s_m$-sortable recursively.
\end{enumerate}
\end{corollary}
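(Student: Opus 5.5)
The plan is to read everything off the explicit output formula of Theorem~\ref{thm:block}:
$$s_m(w)=s_m(G_0)\,M^{\lambda_1}\,s_m(G_1)\,M^{\lambda_2}\cdots M^{\lambda_\ell}\,s_m(G_\ell)\,M^{r_\ell}.$$
Call this word $u$. One-pass sortability of $w$ means that $u$ is weakly increasing. I would prove both directions by examining where a descent of $u$ can occur. I use two facts throughout. First, $M$ is strictly larger than every letter of every $G_i$. Second, by Remark~\ref{rem:conservation}, $s_m(G_i)$ has the same multiset of letters as $G_i$, so its minimum and maximum equal $\min(G_i)$ and $\max(G_i)$.

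\emph{Step 1 (the $M$-leaks).} In a weakly increasing word, no copy of the maximum letter $M$ can be followed later by a smaller letter. So $u$ sorted forces $\lambda_i=0$ whenever some block $s_m(G_j)$ with $j\geq i$ is nonempty. Since $G_1,\dots,G_{\ell-1}\neq\varepsilon$ by maximality of the runs, this is equivalent to requiring $\lambda_i=0$, i.e.\ $r_{i-1}+c_i\le m$, exactly for those $i$ with $G_i\neq\varepsilon$. If $G_\ell=\varepsilon$, the leak $M^{\lambda_\ell}$ sits directly before the final run $M^{r_\ell}$ and is harmless. This is condition (1). Conversely, under (1) every nonzero $M$-block of $u$ lies in the terminal stretch of $M$'s, and
$$u=s_m(G_0)\,s_m(G_1)\cdots s_m(G_\ell)\,M^{p+q}\qquad(\text{for } i \text{ with } G_i\ne\varepsilon,\ \lambda_i=0).$$

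\emph{Step 2 (the remaining concatenation).} After Step 1, $u$ is sorted if and only if three things hold. Each $s_m(G_i)$ must be sorted, which is condition (3). Across each junction between consecutive \emph{nonempty} blocks $G_{i-1}$ and $G_i$, which are now adjacent because the $M^{\lambda_i}$ between them has vanished, we need $\max(G_{i-1})\le\min(G_i)$; this is condition (2). Finally, the trailing $M$'s cause no descent. For sufficiency, note that adjacent inequalities among sorted blocks chain transitively, so together they give global weak increase. Here I use the fact that the extremal letters of $s_m(G_i)$ are those of $G_i$.

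The main obstacle is not computational. It is pinning down the precise meaning of conditions (1) and (2) in the edge cases. These are: $G_0=\varepsilon$ (the first junction is then vacuous), $G_\ell=\varepsilon$ (the last leak is then allowed), and $\ell=0$ (then $w=G_0$ with no $M$, which is vacuous, or the word is a pure run of $M$, which is trivially sorted). I would handle these explicitly before running the two-direction argument above. In particular, I would interpret (2) as ranging over consecutive nonempty blocks.
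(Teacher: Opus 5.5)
Your proposal is correct and is essentially the paper's argument. Both read sortability off the Block Decomposition formula: a leaked block $M^{\lambda_i}$ followed by any nonempty $s_m(G_j)$ forces a descent, and, by conservation, the remaining concatenation of the blocks $s_m(G_i)$ is sorted exactly when each block is sorted and the junction inequalities $\max(G_{i-1})\le\min(G_i)$ hold. Your write-up is somewhat more careful than the paper's, which does not explicitly argue that condition (3) is necessary. Two small slips: the undefined exponent in $M^{p+q}$ should be $\lambda_\ell+r_\ell$, and since $M=\max(w)$ the case $\ell=0$ arises only for the empty word (a pure run of $M$ has $\ell=1$).
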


\begin{proof}
By Theorem~\ref{thm:block}, whenever condition (1) is satisfied at every boundary, the intermediate $M^{\lambda_i}$ blocks vanish ($\lambda_i=0$), so we are left with the concatenation $s_m(G_0)\ldots s_m(G_\ell)$ of already sorted blocks (possibly followed by a final block $M^{r_\ell}$ at the very end, but this cannot affect the one-pass sortability since $M$ is the greatest letter). Since Remark~\ref{rem:conservation} tells us that $s_m(G_i)$ has the same multisubset as $G_i$, we have $\max(s_m(G_{i-1}))=\max(G_{i-1})$ and $\min(s_m(G_i))=\min(G_i)$; by assumption (3) each $s_m(G_i)$ is individually sorted, so their concatenation is sorted precisely in case condition (2) holds at consecutive boundaries. Conversely, if condition (1) fails at some boundary followed by a nonempty $G_i$, then a letter $M$ is followed by a smaller letter, a descent, and $w$ is not one-pass sortable.
\end{proof}

Corollary~\ref{cor:criterion} is a fully general, decidable, recursive criterion for one-pass sortability -- our response to the challenge of finding what a pattern-avoidance characterization of $s_m$-sortability should be, once repeated letters and a finite buffer are admitted, extending the boundary characterizations of Defant and Kravitz~\cite{defant-kravitz} (Proposition 5.1) to every finite $m$. It specializes to the classic result:

\begin{corollary}[Recovery of the classical theorem]
\label{cor:recover}
If $w=\pi$ is a permutation, then $\ell=1$ and $c_1=1$ automatically (the maximum occurs exactly once), and condition (1) of Corollary~\ref{cor:criterion} becomes $0+1\leq m$, holding for all $m\geq1$. Therefore Corollary~\ref{cor:criterion} collapses, for every $m$, to the classic recursive criterion of Theorem~\ref{thm:west}:
\end{corollary}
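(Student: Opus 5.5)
The plan is to specialize Corollary~\ref{cor:criterion} to the case where $w=\pi$ is a permutation and check that the three conditions reduce, term by term, to the recursive criterion in Theorem~\ref{thm:west}. First, for a permutation $\pi$ with maximum $n$ we have exactly one occurrence of $M=n$. Hence the block decomposition has $\ell=1$ and $c_1=1$, so $\pi=G_0\,n\,G_1$ with $G_0=L$ and $G_1=R$. Since $r_0=0$, the recursion gives $r_0+c_1=1\le m$ for every $m\ge1$. Therefore condition (1) holds automatically, and consequently $\lambda_1=0$ by Theorem~\ref{thm:block}.

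Next I will read off what condition (2) says in this situation. With $\lambda_1=0$, the two blocks $G_0$ and $G_1$ become adjacent, and condition (2) demands $\max(L)\le\min(R)$. Because the entries of a permutation are distinct, this weak inequality is the same as the strict one: every entry of $L$ is less than every entry of $R$. If $L$ or $R$ is empty the condition is vacuous, matching the classical statement. Condition (3) says that $L$ and $R$ are each one-pass $s_m$-sortable. Both are themselves words with distinct letters, so by Remark~\ref{rem:permsagree} $s_m$ acts on them as $s_1$. Thus condition (3) is exactly ``$L,R$ are recursively one-pass sortable'' in the classical sense.

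Finally, I will assemble the pieces. The result is that $s_m(\pi)$ is sorted iff $L<R$ entrywise and $L,R$ are one-pass sortable. This is the second formulation in Theorem~\ref{thm:west}, and hence equivalent to $231$-avoidance, for every $m$. To keep the induction honest, the recursion in condition (3) can be unwound by induction on length. This is legitimate because at every level the subwords are again permutations of smaller sets, so the same collapse ($\ell=1$, $c_1=1$, condition (1) free) applies at each level.

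The only point needing care, and the one I expect to be the main obstacle, is justifying that condition (2) applies to precisely the pair $(G_0,G_1)$ and to no spurious pairs. The corollary phrases condition (2) in terms of blocks that ``became adjacent due to condition (1)''. I will handle this by invoking Theorem~\ref{thm:block} directly: it gives $s_m(\pi)=s_m(L)\,s_m(R)\,n$, which is sorted iff $s_m(L)$ and $s_m(R)$ are sorted and $\max L\le\min R$. This formula also gives the equivalence directly, independently of the corollary's wording.
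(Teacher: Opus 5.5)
Your proposal is correct and follows the paper's own reasoning. You specialize to $\ell=1$, $c_1=1$, so condition (1) reads $0+1\le m$ and $\lambda_1=0$, and you identify conditions (2)--(3) with the classical recursion via Remark~\ref{rem:permsagree}. Your extra step of reading sortability directly off $s_m(\pi)=s_m(L)\,s_m(R)\,n$ from Theorem~\ref{thm:block} is sound, sidesteps the loose wording of condition (2), and is slightly more self-contained than the paper's one-line justification.
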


In general terms, on the two extremal cases of the family, Corollary~\ref{cor:criterion} recaptures the two descriptions of Defant and Kravitz~\cite{defant-kravitz} (Proposition 5.1): taking $m\rightarrow\infty$, condition (1) cannot possibly fail (Lemma~\ref{lem:stab}), hence Corollary~\ref{cor:criterion} becomes just condition (2)-(3), which is precisely $231$-avoidance in the usual sense for words, corresponding to their characterization of hare-sortability; and taking $m=1$, condition (1) forces any run of the largest letter followed by a strictly smaller letter to be of length $1$, in addition to condition (2), yielding their joint $231$ and $221$ avoidance characterization of tortoise-sortability. We omit the process of expressing this characterization in terms of patterns completely, since Corollary~\ref{cor:criterion} already provides the fully recursive description for every finite value of $m$.

Finally, using Theorem~\ref{thm:dm} we prove that the following, quite natural, first attempt at a characterization fails.

\begin{corollary}[The naive candidate fails]
\label{cor:naive}
There is no characterization of one-pass $s_m$-sortability in terms of classical (multiplicity-blind) pattern containment alone -- that is, depending only on which order-patterns of \emph{pairwise distinct} values occur as subsequences of $w$, without reference to $m$ or to the run-lengths of repeated letters.
\end{corollary}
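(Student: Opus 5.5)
The plan is to exhibit two words that contain exactly the same classical patterns of pairwise distinct values, but of which one is one-pass $s_m$-sortable and the other is not. Any characterization that depends only on the set of distinct-value patterns would have to give both words the same verdict, so such a characterization cannot exist.

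Fix $m$ and use the two-letter family from Theorem~\ref{thm:dm}, with $a>b$. Take $u=a^mb$ and $v=a^{m+1}b$. By Theorem~\ref{thm:dm}, $d_m(u)=\lceil m/m\rceil=1$, so $u$ is one-pass sortable. Also $d_m(v)=\lceil (m+1)/m\rceil=2$, so $v$ is not. The same conclusion can be read off Lemma~\ref{lem:main} directly: $s_m(a^{m+1}b)=a\,b\,a^m$, which has a descent.

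Next we check that $u$ and $v$ have the same classical pattern content. Both words use only the letters $\{a,b\}$, with an occurrence of $a$ before an occurrence of $b$ and no $b$ before an $a$. A subsequence of pairwise distinct values therefore has length at most $2$. The length-$2$ pattern $21$ occurs in both words, the pattern $12$ occurs in neither, and the length-$1$ pattern $1$ occurs in both. So the two words contain exactly the same set of distinct-value patterns, yet they receive different sortability verdicts.

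If the statement is read as requiring a single condition that works uniformly for all $m$, the same family gives a second obstruction. For a fixed word $a^kb$ with $k\ge2$, the verdict switches with $m$: the word is one-pass sortable exactly when $m\ge k$, while its pattern content does not depend on $m$ at all. I do not expect any real obstacle here. The only care needed is to state precisely what ``multiplicity-blind pattern containment'' means, namely that only the set of distinct-value order patterns is recorded, so that it is clear $u$ and $v$ are indistinguishable under it. Finally, I would note that the cutoff $c_1\le m$ in condition (1) of Corollary~\ref{cor:criterion} is exactly the run-length information that classical pattern avoidance cannot see.
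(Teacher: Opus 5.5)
Your proposal is correct and takes essentially the same route as the paper. Both arguments use Theorem~\ref{thm:dm} to see that $a^kb$ is one-pass $s_m$-sortable exactly when $k\le m$, while its multiplicity-blind pattern content does not depend on $k$; your explicit pair $a^mb$ versus $a^{m+1}b$, and your check that the length-$2$ patterns also agree ($21$ present, $12$ absent), make precise what the paper's proof states only for patterns of length at least $3$.
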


\begin{proof}
By Theorem~\ref{thm:dm}, a word $w=a^kb$ is one-pass $s_m$-sortable if and only if $k\leq m$. But $w$ contains exactly two distinct letters, and therefore trivially avoids \emph{all} classical patterns of length $\geq3$ (such a pattern requires three pairwise distinct relative values). Classical pattern containment sees $a^kb$ identically for all $k$, whereas sortability changes from $k\leq m$ to $k>m$: no multiplicity-blind criterion can account for this dependency on $k$ versus $m$.
\end{proof}

\begin{example}
Direct application of Corollary~\ref{cor:criterion} to $w=3444241$ of Proposition~\ref{prop:nonmono}: here $\ell=2$, $c_1=3$, $c_2=1$, $G_0=3$, $G_1=2$, $G_2=1$. Condition (1) at $i=1$ requires $3\leq m$; at $i=2$ it requires $r_1+1\leq m$ where $r_1=\min(3,m)$. Even for $m$ large enough to satisfy condition (1) everywhere, condition (2) requires $\max(G_0)\leq\min(G_1)$, that is, $3\leq2$, false. Hence $w$ is not one-pass $s_m$-sortable for \emph{any} $m$ -- consistent with $d_2(w)=3$ and $d_3(w)=4$, both $>1$, in Proposition~\ref{prop:nonmono}.
\end{example}

\section{The sandwich conjecture}
\label{sec:sandwich}

Proposition~\ref{prop:nonmono} says that the interior of the family $(s_m)$ fails monotonicity in speed. We now proceed with the consideration of the two extremes, $s_1$ and $s_\infty$, driven by the abstract's last claim.

\begin{conjecture}[Sandwich]
\label{conj:sandwich}
For every word $w$ and every $m\geq1$,
$$d_1(w) \;\geq\; d_m(w) \;\geq\; d_\infty(w).$$
\end{conjecture}

\subsection{Complete proof for two-letter words}

\begin{proposition}
\label{prop:sandwichtwoletter}
Conjecture~\ref{conj:sandwich} holds for every word of the form $w=a^kb$.
\end{proposition}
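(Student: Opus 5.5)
The plan is to reduce everything to Theorem~\ref{thm:dm} and Lemma~\ref{lem:stab}. For $w=a^kb$ with $a>b$ and $k\geq1$, Theorem~\ref{thm:dm} gives $d_m(w)=\lceil k/m\rceil$ for every finite $m\geq1$. The case $k=0$ (where $w=b$ is already sorted and all $d$'s vanish) is trivial and will be recorded separately.

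First I identify $d_\infty(w)$. By Lemma~\ref{lem:stab} and the discussion following it, $d_\infty(w)=d_{m^*}(w)$ for any fixed $m^*>|w|=k+1$. Since every iterate has the same length $k+1$, one such $m^*$ serves for all passes. Taking, say, $m^*=k+2$ in Theorem~\ref{thm:dm} gives
\[
d_\infty(w)=\left\lceil \frac{k}{k+2}\right\rceil = 1 .
\]
This also follows directly from Lemma~\ref{lem:main}, since $s_\infty(a^kb)=ba^k$ is sorted while $a^kb$ is not.

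Next come the two inequalities. For the left one, $d_1(w)=k$, and since $m\geq1$ we have $\lceil k/m\rceil\leq k$; this is just the case $m=1<m'$ of Corollary~\ref{cor:basicmono}, with equality when $m=1$. For the right one, $k\geq1$ gives $\lceil k/m\rceil\geq1=d_\infty(w)$.

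I do not expect any real obstacle. The one point needing care is justifying that $d_\infty$ may be computed as $d_{m^*}$ with a single large $m^*$ across all iterates. This is exactly where Remark~\ref{rem:conservation}, which shows length is preserved, combines with Lemma~\ref{lem:stab}.
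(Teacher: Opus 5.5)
Your proposal is correct and follows essentially the paper's argument: both take $d_m(a^kb)=\lceil k/m\rceil$ from Theorem~\ref{thm:dm}, bound it above by $k=d_1(a^kb)$, and bound it below by $1=d_\infty(a^kb)$. The only differences are minor. The paper gets $d_\infty(a^kb)=1$ by directly tracing $s_\infty(a^kb)=ba^k$, whereas you obtain it from Lemma~\ref{lem:stab} together with Theorem~\ref{thm:dm} (or Lemma~\ref{lem:main}) at a single large finite $m^*$. You also record the trivial case $k=0$ explicitly.
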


\begin{proof}
As stated in Theorem~\ref{thm:dm}, $d_m(a^kb)=\lceil k/m\rceil$, which is non-increasing in $m$ (Corollary~\ref{cor:basicmono}), and $d_1(a^kb)=k$. As $\lceil k/m\rceil=1$ for any $m\geq k$ (since $0<k/m\leq1$ always rounds up to $1$), and this is the minimum value that can be taken by $d_m$, we have $d_m(a^kb)\leq k=d_1(a^kb)$ for any $m\geq1$. It only remains to calculate $d_\infty(a^kb)$: taking $m=\infty$, all $k$ instances of $a$ will get onto the stack without being popped off (rule~(i)/(iii), as there can be no pops in the infinite case because of the tiebreaking), and then $b$ will get on the stack (rule~(ii), as $a>b$); as a result, when popping the whole stack, we will output $b$ first, and then the run of $a$'s, so that $s_\infty(a^kb)=ba^k$, which is sorted. Thus $d_\infty(a^kb)=1$, and $d_\infty(a^kb)=1\leq \lceil k/m\rceil = d_m(a^kb)\leq k = d_1(a^kb)$ for every $m\geq1$.
\end{proof}

\subsection{Structural coupling of \texorpdfstring{$s_\infty$}{s\_infty}}

The reason why Conjecture~\ref{conj:sandwich} appeals is that $s_\infty$ should, morally, act like classical stack sorting but with some tie-breaking on the repetitions in $w$. We will now make this rigorous and prove it, again for a single pass.

\begin{definition}[Canonical refinement]
Given a word $w$ of length $n$, its \emph{canonical refinement} $\hat w$ is the unique permutation of $\{1,\ldots,n\}$ that is order-isomorphic to $w$ under the tie-breaking rule that says whenever $w_i=w_j$ and $i<j$ then $\hat w_i>\hat w_j$ (i.e.\ an earlier occurrence of a letter is assigned a \emph{higher} rank than a later occurrence).
\end{definition}

\begin{theorem}[De-refinement]
\label{thm:deref}
For every word $w$, $s_\infty(w)$ is obtained by performing the classical, repetition-free stack-sorting procedure $s$ on the canonical refinement $\hat w$ of $w$ and then, for each output position, replacing the rank with the corresponding original letter.
\end{theorem}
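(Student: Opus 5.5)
Run the two machines side by side and show, by induction on the number of elementary steps, that they perform the same sequence of pushes and pops. Concretely, we couple the $s_\infty$ process on $w$ (with $m>|w|$, so by Lemma~\ref{lem:stab} rule~(iii) is always a push) with the classical process $s$ on $\hat w$. The invariant is that at every moment the two stacks hold the same set of positions in the same order, and the two outputs list the same positions in the same order. Here a ``position'' $i$ is labelled by $w_i$ in the first process and by $\hat w_i$ in the second. If the invariant survives to the end, the final drains also agree. Replacing each rank $\hat w_i$ in $s(\hat w)$ by $w_i$ then gives $s_\infty(w)$, which is the statement. The map from ranks back to letters is well defined because $\hat w$ is order-isomorphic to $w$ up to ties, and $\hat w_i$ determines $i$.

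The inductive step compares the decisions of the two machines. Suppose the next input position is $j$ and the top of the stack is position $i$, which was read earlier, so $i<j$. If the stack is empty, both machines push. If $w_i\neq w_j$, then $w_i>w_j$ if and only if $\hat w_i>\hat w_j$, because the refinement preserves strict order. Hence rule~(ii) corresponds to a classical push and rule~(iv) to a classical pop. If $w_i=w_j$, then $s_\infty$ pushes by rule~(iii), since $m=\infty$. On the refined side, $i<j$ gives $\hat w_i>\hat w_j$ by the tie-breaking convention, so classical $s$ also pushes. A pop followed by a re-check of the same pending position $j$ falls under the same case analysis against the new top, so the coupling carries through the cascade of pops.

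The step needing most care is the claim that the top of the stack always has a smaller index than the incoming position. This holds because every position on the stack has already been read and $j$ has not. That fact is what makes the convention ``earlier occurrence gets higher rank'' match the hare's behaviour of stacking equal letters. The opposite convention would reproduce the tortoise operator $s_1$ instead. The only remaining check is that the drain at the end of input is the same reverse-order emptying in both procedures.
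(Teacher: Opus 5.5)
Your proposal is correct and is essentially the paper's proof: the same step-by-step coupling of the $s_\infty$ process on $w$ with the classical process on $\hat w$, the same case split (empty stack, strict inequality in either direction, tie), and the same key observation. That observation is that the stack top was read before the incoming position, so the convention $\hat w_i>\hat w_j$ for $i<j$ forces a classical push exactly when rule~(iii) pushes. Your aside that the opposite tie-breaking convention would couple classical sorting with $s_1$ instead is also correct, though the proof does not need it.
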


\begin{proof}
The operations $s_\infty$ on $w$ and $s$ on $\hat w$ are both online left-to-right stack processes where one input position is processed at a time. We shall show, by induction on the number of positions consumed, that the two processes perform the same sequence of operations on the same positions (not just values), and the theorem follows immediately -- since reading off the letters $w_{(\cdot)}$ at the positions produced is precisely applying the value-forgetting projection $\rho:\hat w_i\mapsto w_i$ to reading off the ranks $\hat w_{(\cdot)}$ at the same positions.

This holds trivially initially when nothing has been processed yet (both stacks empty). Suppose that it holds after $i$ positions have been consumed, and let $p$ be the next position to be scanned, with $q$ the position that is on top of the stack (if there is any), if present.
\begin{itemize}[leftmargin=1.6em]
\item The stack is empty: both processes push $p$.
\item $w_q>w_p$ then also $\hat w_q>\hat w_p$ (rank strictly respects value order), so both push $p$.
\item $w_q<w_p$ then also $\hat w_q<\hat w_p$, so both pop $q$ and re-examine $p$ against the new top position (this sub-case recursively follows the same argument).
\item $w_q=w_p$: $q$ is on the (common) stack while $p$ is just being scanned, so $q$ was pushed at an earlier step and $q<p$ as positions. As $q<p$ and $\hat w$ refines $w$, the tie-breaking convention dictates that $\hat w_q>\hat w_p$. Hence the classical process, seeing top-rank exceed incoming rank, pushes $p$ (its rule is "push if top exceeds incoming"). Meanwhile $s_\infty$'s tie rule ($m=\infty$) also always pushes on equality. Both push $p$.
\end{itemize}
All cases lead to the two processes making the same decision on the same position, concluding the induction.
\end{proof}

\begin{remark}
    Theorem~\ref{thm:deref} was similarly proven by Defant and Kravitz in Proposition 2.1 of their paper~\cite{defant-kravitz}. We retain the proof of the theorem as it is for the purpose of demonstrating an alternative proof through induction, complementing their approach of working directly with global index positions.
\end{remark}

\begin{example}
In the case of $w=2231$ (the word in Proposition~\ref{prop:noncomm}), the canonical refinement is $\hat w=3,2,4,1$ (where the first "$2$" has precedence over the second one). Classical stack sorting produces $s(\hat w)=2,3,1,4$; undoing the refinement (translating ranks $1,2,3,4$ back to values $1,2,2,3$) gives $2,2,1,3$, and this is exactly the outcome of tracing $s_\infty(2231)$.
\end{example}

\begin{corollary}
\label{cor:sufficient}
If the canonical refinement $\hat w$ avoids the pattern $231$, then $w$ is one-pass $s_\infty$-sortable.
\end{corollary}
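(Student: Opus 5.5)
The plan is to combine Theorem~\ref{thm:deref} with the classical Knuth--West theorem (Theorem~\ref{thm:west}). Suppose $\hat w$ avoids $231$. Since $\hat w$ is a genuine permutation of $\{1,\ldots,n\}$, Theorem~\ref{thm:west} gives $s(\hat w)=\mathrm{id}=12\cdots n$. By Theorem~\ref{thm:deref}, $s_\infty(w)$ is obtained from $s(\hat w)$ by the value-forgetting projection $\rho$ that replaces each rank $\hat w_i$ by the letter $w_i$. So $s_\infty(w)=\rho(1)\rho(2)\cdots\rho(n)$.

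It remains to check that this word is weakly increasing. The central point is that $\rho$, viewed as a map from ranks to letters, is weakly order-preserving. This follows from the definition of the canonical refinement: $\hat w$ is order-isomorphic to $w$ up to tie-breaking. Precisely, if $w_i<w_j$ then $\hat w_i<\hat w_j$. Contrapositively, if $\hat w_i<\hat w_j$ then $w_i\leq w_j$, since ties in $w$ may be ranked either way but strict inequalities are preserved.

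We must also confirm that $\rho$ is well defined on ranks. Each rank $r\in\{1,\ldots,n\}$ equals $\hat w_i$ for exactly one position $i$, and we set $\rho(r)=w_i$. Then for $r<r'$, with $r=\hat w_i$ and $r'=\hat w_j$, we get $\rho(r)=w_i\leq w_j=\rho(r')$. Hence $\rho(1)\leq\rho(2)\leq\cdots\leq\rho(n)$, so $s_\infty(w)$ is sorted, and therefore $d_\infty(w)\leq 1$.

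There is no real obstacle here. The only step needing care is this monotonicity of $\rho$. It relies solely on the fact that the refinement respects strict inequalities of $w$; the particular tie-breaking direction (earlier occurrence gets the higher rank) plays no role in this step. That direction matters only inside Theorem~\ref{thm:deref}, which we take as given.
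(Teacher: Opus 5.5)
Your proof is correct and follows the paper's argument: you combine Theorem~\ref{thm:west} to get $s(\hat w)=\mathrm{id}$ with Theorem~\ref{thm:deref} to identify $s_\infty(w)$ with $\rho(1)\rho(2)\cdots\rho(n)$, and you conclude from the weak monotonicity of $\rho$. Your check that $\rho$ is well defined and weakly order-preserving, and that the tie-breaking direction plays no role in that step, simply spells out the one line the paper states without justification.
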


\begin{proof}
According to Theorem~\ref{thm:west}, $231$-avoidance of $\hat w$ implies that $s(\hat w)=\mathrm{id}$, i.e., $s(\hat w)$ is a strictly increasing sequence of ranks; in this case, since $\rho$ preserves the order on values, applying $\rho$ to a strictly increasing sequence of ranks produces the sorted word $w$. According to Theorem~\ref{thm:deref}, this is exactly $s_\infty(w)$.
\end{proof}

It is important to emphasize that the converse of Corollary~\ref{cor:sufficient} is \emph{not true}: the outcome of classical stack sorting $s(\hat w)$ may fail to be the identity permutation while still producing a sorted word upon de-refinement, provided all "disorder" in $s(\hat w)$ takes place between two ranks which refine to the same value. The exact condition is given by Corollary~\ref{cor:criterion} and is strictly stronger than $231$-avoidance of \emph{any} linear extension; this is consistent with and refines the exact $231$-avoidance characterization of hare-sortability given by Defant and Kravitz~\cite{defant-kravitz} (Proposition 5.1).

\subsection{Why the natural multi-pass approach fails}

Theorem~\ref{thm:deref} connects $s_\infty$ with the classical stack sorting $s$ for \emph{one} pass. It might seem reasonable to expect that iterating this correspondence, i.e., refining $s_\infty(w)$ afresh and comparing the result with $s(s(\hat w))=s^2(\hat w)$, will allow us to reduce the distance $d_\infty(w)$ to the classical quantity $d_1(\hat w)$ (the number of passes of the ordinary West map required to sort the permutation $\hat w$), which is indeed exactly the strategy suggested as the natural approach to Conjecture~\ref{conj:sandwich}. We record here, precisely, why this approach does \emph{not} work directly, in order to guide future attempts.

To continue our previous example: refining $s_\infty(w)=2213$ afresh (treating $2213$ as a \emph{new} word and tie-breaking the two $2$'s according to their positions within this word) produces the permutation $3,2,1,4$, since the first $2$ in $2213$ is now greater than the second one. However, $s(\hat w)$ is already computed above to be $2,3,1,4$ -- a \emph{different} permutation. In both cases, however, de-refining produces the same word $2213=s_\infty(w)$. Nevertheless, as labeled permutations, they do not coincide: the classical algorithm applied to the fixed permutation $\hat w$ does not, after one additional pass of $s$, correspond to the freshly produced canonical refinement of $s_\infty(w)$. The reason is that the order of the two tied copies of the letter as they are emitted by $s_\infty$ is not necessarily the same as the order in which the refined letters would appear if we re-derived the canonical refinement from scratch; provenance from the original word is not necessarily preserved by position in the new word. Therefore, deriving $d_\infty(w)=d_1(\hat w)$ in general requires maintaining a consistent identity of the tied letters through the passes and not deriving a new refinement on each pass. Whether such a bookkeeping scheme can be made to work is a delicate technical question of independent interest; as the next two subsections show, however, no scheme of this kind, nor any other route to a general coupling of $s_1$-passes and $s_\infty$-passes, could ever have rescued Conjecture~\ref{conj:sandwich} as stated, because the conjecture itself is already known to be false, as we now recall and then extend.

\subsection{Testing the conjecture}
\label{subsec:testcase}

Faced with the problem of proving Theorem~\ref{thm:deref} for two passes or more, it would be incorrect to push further the coupling argument approach and instead one should remember the known facts about Conjecture~\ref{conj:sandwich} from two ends. Indeed, Defant and Kravitz~\cite{defant-kravitz} (Theorem 2.3) have proven that the outer inequality $d_1(w)\geq d_\infty(w)$, or $d_{\text{tortoise}}(w) \geq d_{\text{hare}}(w)$, may not hold for words of length $\geq 7$, and their examples include the word
$$w = 3662451.$$
This specific base case will be reproduced and verified in this work as an example of the construction below, but it is not a new result itself and it should be attributed to Defant and Kravitz.

Sorting $w$ pass-by-pass under $s_1$ yields
$$
3662451
\ \xrightarrow{\,s_1\,}\
3624156
\ \xrightarrow{\,s_1\,}\
3214566
\ \xrightarrow{\,s_1\,}\
1234566,
$$
so $d_1(w)=3$ passes are required to sort $w$ using $s_1$.

Applying $s_m$ for $m\geq2$ (in particular $s_\infty$) to the same word, on the other hand, gives
$$
3662451
\ \xrightarrow{\,s_m\,}\
3241566
\ \xrightarrow{\,s_m\,}\
2314566
\ \xrightarrow{\,s_m\,}\
2134566
\ \xrightarrow{\,s_m\,}\
1234566,
$$
requiring four passes: $d_m(w)=4$ for all $m\geq2$.

This is exactly the opposite of what Conjecture~\ref{conj:sandwich} would have us believe. The conjecture implies $d_1(w)\geq d_m(w)\geq d_\infty(w)$ for all $m$; here, in contrast,
$$d_1(w)=3 \;<\; 4 = d_m(w) \;=\; d_\infty(w) \qquad\text{for all } m\geq2.$$
Giving the stack more room for tie-breaking (larger $m$, all the way up to $\infty$) does not speed things up, but slows them down -- indeed, the first of the two inequalities claimed by the sandwich inequality fails. Of course, one seven-letter example does not justify making general conclusions; in order to understand why this happens, on the other hand, is a prerequisite to seeing the reason for the failure, and once this mechanism is understood, it generalizes immediately.

\subsection{From one word to a family}
\label{subsec:family}

Consider again the two examples above. In essence, the $s_1$ and the $s_m$ (with $m\geq2$) sorts are cycling the permutation $2,3,4,5,1$, in order: compare $3624156$ and $u=23451$, and note that discarding the leading $3$ and the trailing $6$ in each of the intermediate states results in applying the map
$$u_0=(2,3,4,5,1),\qquad u_{j}=s_1(u_{j-1})$$
to itself, again and again, until $u_0$ is sorted into its form $(1,2,3,4,5)$. This is the classical West map applied to a cyclic permutation; and the behavior of the latter is completely known (Lemma~\ref{lem:cyclic} below). In fact, the leading $3$ and the trailing pair of $6$'s are doing different tasks: the $6$'s are simply carried along (Lemma~\ref{lem:trailing}), whereas the $3$ is a single letter (the third largest in the permutation part), which gets \emph{deleted} from $u_0$ before the cycling starts, and then -- crucially -- \emph{reabsorbs} into the cycle after either one (if $m\geq2$) or two passes (if $m=1$), at which point we recover a bona fide permutation and Lemma~\ref{lem:cyclic} kicks in. The difference between $d_1$ and $d_{m\geq2}$ is entirely due to \emph{how many passes it takes to re-absorb the deleted letter} $3$ into the cycle -- and that number depends on $m$, since it depends on the interaction between the leaked/drained copies of the repeated letter and the tie-breaking width.

This is the entire mechanism. What follows below is merely a generalization of it, designed so as to yield $n=7$ as the particular example of $w=3662451$.

\begin{definition}[The family $w_n$]
\label{def:wn}
Fix $n\geq7$. Set
$$N=n-2,\qquad a = N-2\ (=n-4),\qquad M=n-1.$$
On the alphabet $\{1,\ldots,N\}$ define
$$u_0=(2,3,\ldots,N,1),\qquad u_j = s_1(u_{j-1}).$$
Given a permutation $v$ on $\{1,\ldots,N\}$ containing the letter $a$, denote $v^{-a}$ the word obtained by deleting from $v$ its (unique) occurrence of the letter $a$, a word of letters $\{1,\ldots,N\}\setminus\{a\}$. Define
$$w_n \;=\; a\cdot M\cdot M\cdot u_0^{-a}.$$
\end{definition}

Taking $n=7$ we get $N=5$, $a=3$, $M=6$, $u_0=(2,3,4,5,1)$, $u_0^{-a}=(2,4,5,1)$, and $w_7=3\cdot6\cdot6\cdot2451=3662451$, exactly the word from \S\ref{subsec:testcase}. Below is the theorem claiming that what was observed there is not a particular property of that word but a generic feature of the whole family.

\begin{theorem}
\label{thm:wn}
For every $n\geq7$,
$$d_1(w_n) = n-4, \qquad d_m(w_n) = n-3 \ \text{ for all } m\geq2 \ (\text{including } \infty).$$
In particular $d_1(w_n) < d_m(w_n)$ for all $m\geq2$ and all $n\geq7$.
\end{theorem}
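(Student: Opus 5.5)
The plan is to track the whole orbit of $w_n$ under each operator by reducing everything to the classical West map on the cyclic permutation $u_0=(2,3,\ldots,N,1)$. We use the explicit description of its iterates (Lemma~\ref{lem:cyclic}):
$$u_j=(2,3,\ldots,N-j,\,1,\,N-j+1,\ldots,N),\qquad 0\le j\le N-1,$$
so that $u_j$ is sorted exactly when $j=N-1$, and $d_1(u_j)=N-1-j$. Beyond this, we need three ingredients. First, the Block Decomposition Theorem (Theorem~\ref{thm:block}) applied with maximum letter $M=n-1$ handles every pass in which the two copies of $M$ are not yet at the end. Second, Lemma~\ref{lem:trailing} says that a trailing block $MM$ is carried along inertly, so $s_m(vMM)=s_m(v)MM$. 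Third, as soon as the part before $MM$ is a permutation of $\{1,\ldots,N\}$, Remark~\ref{rem:permsagree} lets every $s_m$ act on it as the classical $s$.

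\emph{First pass.} Write $w_n=G_0M^2G_1$ with $G_0=a$ and $G_1=u_0^{-a}$. Theorem~\ref{thm:block} gives $\lambda_1=\max(2-m,0)$ and $r_1=\min(2,m)$, and $G_1$ is a permutation, so $s_m(G_1)=s(u_0^{-a})$. Deleting a letter other than $1$ and $N$ from the increasing prefix of $u_0$ commutes with $s$ here, so $s(u_0^{-a})=u_1^{-a}$. Hence
$$s_1(w_n)=a\,M\,u_1^{-a}\,M, \qquad s_m(w_n)=a\,u_1^{-a}\,MM\ \ (m\ge2).$$
For $m=1$ we apply Theorem~\ref{thm:block} once more. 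Now $c_1=c_2=1$, so $\lambda_1=0$ and $\lambda_2=\max(1+1-1,0)=1$. The letter $a$ is flushed, $u_1^{-a}$ is buried and sorted as the permutation $u_2^{-a}$, and the final $M$ leaks one $M$. This gives $s_1^2(w_n)=a\,u_2^{-a}\,MM$. So for $m=1$ the leading $a$ has been detached for two passes, while for $m\ge2$ it has been detached for only one.

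\emph{Reabsorption lemma.} From now on the $MM$ is inert, and we must compute the classical $s$ on the permutation $a\cdot u_j^{-a}$ of $\{1,\ldots,N\}$, with $a=N-2$. Trace the stack directly. The letter $a$ sits at the bottom while $2,\ldots,a-1$ pass through, each popped by its successor. The next letter larger than $a$ then flushes $a-1$ and $a$ together.
\begin{itemize}[leftmargin=1.6em]
\item If $a\le N-j-1$, i.e.\ $j\le1$, the flush happens before the $1$ arrives. The output is $2,\ldots,N-j-1,1,N-j,\ldots,N$, so $s(a\,u_j^{-a})=u_{j+1}$.
\item If $j=2$, then $a=N-j$ and $u_2^{-a}=(2,\ldots,N-3,1,N-1,N)$. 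Now the $1$ is pushed on top of $N-3$ and $a$, and $N-1$ pops $1,N-3,a$. The output is $(2,\ldots,N-4,1,N-3,\ldots,N)=u_4$.
\end{itemize}
Hence $s_m^2(w_n)=u_2MM$ for $m\ge2$, and $s_1^3(w_n)=u_4MM$. The case $j=2$ is where the lagging $a$ is absorbed ``for free'' and the $m=1$ orbit skips $u_3$. This is the heart of the argument, and the step I expect to need the most care: the indices must be checked uniformly in $n$, including the boundary case $n=7$, where $2,\ldots,N-4$ is the single letter $2$ and $u_4=\mathrm{id}$.

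\emph{Counting.} Using Lemma~\ref{lem:cyclic}, the remaining passes are $N-1-4=N-5$ for $m=1$ and $N-1-2=N-3$ for $m\ge2$. The totals are therefore
$$d_1(w_n)=3+(N-5)=N-2=n-4,\qquad d_m(w_n)=2+(N-3)=N-1=n-3.$$
It remains to check that no earlier iterate is sorted. Each of $w_n$, $aMu_1^{-a}M$, $a\,u_j^{-a}MM$ and $u_jMM$ (for $j<N-1$) contains a descent: either $M$ before a smaller letter, $a$ before $2$, or $N-j$ before $1$. This gives the exact values, and the case $m=\infty$ is covered by Lemma~\ref{lem:stab}.
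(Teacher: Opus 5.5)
Your proposal is correct and follows the paper's proof essentially step for step. It uses the block decomposition for the first pass (and again for the second pass when $m=1$), the inert trailing $MM$, the two reabsorption computations $s(a\,u_1^{-a})=u_2$ and $s(a\,u_2^{-a})=u_4$, and then the cyclic recursion of Lemma~\ref{lem:cyclic}. Your explicit check that no earlier iterate is sorted is a small addition that the paper leaves implicit.
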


Below are three ingredients needed for the proof: the first one describes exactly how the West map $s_1$ cycles $u_0$; the second claims that the two trailing $M$'s do not affect anything and can be carried around for free; the third one is what separates $d_1$ and $d_{m\geq2}$.

\begin{lemma}[Cyclic recursion]
\label{lem:cyclic}
For $0\leq j\leq N-1$,
$$u_j = (2,\ldots,N-j,\ 1,\ N-j+1,\ldots,N),$$
and $u_{N-1}=(1,2,\ldots,N)$ is the first sorted term, so $d_1(u_0)=N-1$.
\end{lemma}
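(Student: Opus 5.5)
I will prove the formula by induction on $j$. The base case $j=0$ is the definition of $u_0$. The inductive step is a single application of the classical West map $s_1$ to a permutation of the stated shape. Since $u_{j-1}$ is a permutation, Remark~\ref{rem:permsagree} lets me treat $s_1$ as ordinary stack sorting, with no ties to worry about. The cleanest tool is Theorem~\ref{thm:block} specialized to permutations: the maximum occurs once, so $\ell=1$, $c_1=1$, $\lambda_1=0$, $r_1=1$. This gives the familiar recursion $s_1(L\,N\,R)=s_1(L)\,s_1(R)\,N$. I will apply it repeatedly, peeling off the maximum each time.

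Concretely, fix $1\le j\le N-1$ and write $k=N-j+1$, so that $u_{j-1}=(2,\ldots,k,\,1,\,k+1,\ldots,N)$ with $k\ge 2$.

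First I peel the trailing increasing block $k+1,\ldots,N$. When $k<N$, the maximum $N$ is the last letter, so $s_1(u_{j-1})=s_1(2,\ldots,k,1,k+1,\ldots,N-1)\,N$. Iterating this strips $N,N-1,\ldots,k+1$ off the end, each reappearing in order at the end of the output. Alternatively, the Flush Lemma~\ref{lem:flush} gives this directly: each of $k+1,\ldots,N$ exceeds everything on the stack.

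The core computation is then $s_1(2,3,\ldots,k,1)$. Here the maximum $k$ sits just before $1$, so $L=(2,\ldots,k-1)$ and $R=(1)$. Hence $s_1(2,\ldots,k,1)=s_1(2,\ldots,k-1)\,1\,k=(2,\ldots,k-1,1,k)$, since an increasing word is fixed by $s_1$. One can check this by the Flush lemma too: each of $2,\ldots,k-1$ is popped on arrival of the next larger letter, then $k$ and $1$ are pushed and drained as $1,k$.

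Assembling the pieces gives $u_j=(2,\ldots,k-1,\,1,\,k,\ldots,N)=(2,\ldots,N-j,\,1,\,N-j+1,\ldots,N)$, as claimed. The edge case $k=2$, i.e.\ $j=N-1$, gives $s_1(2,1)=(1,2)$ and hence $u_{N-1}=(1,2,\ldots,N)$. This is consistent with the formula, reading the empty block $2,\ldots,N-j$ as vacuous.

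For the sortedness claim, observe that for $j\le N-2$ we have $N-j\ge2$. So $u_j$ begins with $2$ and later contains $1$, which is a descent, and $u_j$ is unsorted. On the other hand $u_{N-1}$ is the identity, so $d_1(u_0)=N-1$. The only real care needed is in the degenerate boundary cases: $j=1$, where the trailing block is empty, and $j=N-1$, where the leading block is empty. There the block recursion must be applied with empty $L$ or $R$, and I would check these explicitly. I do not expect a genuine obstacle; this is the standard fact that $s_1$ moves $1$ one step right past the cyclic prefix.
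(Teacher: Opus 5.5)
Your proof is correct. Like the paper's, it is an induction on $j$, but you obtain the inductive step by a different tool.

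The paper traces the stack on $u_j$ letter by letter:
\begin{itemize}
\item the ascending run $2,\ldots,N-j$ cascades, leaving $N-j$ on the stack;
\item $1$ is pushed on top of it;
\item the first suffix letter $N-j+1$ pops $1$ and then $N-j$;
\item the rest of the suffix cascades.
\end{itemize}

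You instead use the max-peeling recursion $s_1(L\,N\,R)=s_1(L)\,s_1(R)\,N$, which is Theorem~\ref{thm:block} with $\ell=1$ and $c_1=1$. You strip the trailing increasing block one maximum at a time. This reduces everything to the single computation $s_1(2,\ldots,k,1)=(2,\ldots,k-1,1,k)$.

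The paper's trace has the advantage of being self-contained: it needs nothing beyond Definition~\ref{def:sm}. Your route depends on the block decomposition, but it is more structural. In particular, it handles both boundary cases with no special treatment:
\begin{itemize}
\item the empty-prefix case $j=N-1$;
\item the empty-suffix case, which is the paper's $j=0$. There $1$ and $N$ actually leave in the final drain rather than being popped by an incoming suffix letter, a point the paper's trace passes over.
\end{itemize}

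You also explicitly check that $u_0,\ldots,u_{N-2}$ are unsorted, since each has $2$ before $1$. This is what justifies ``first sorted term'' and $d_1(u_0)=N-1$, and the paper's proof leaves it implicit.
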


\begin{proof}
By induction on $j$. True at $j=0$. Assume it for $j$; processing $u_j$ with the stack. The ascending run $2,\ldots,N-j$ cascades trivially (every element pops its predecessor the moment a bigger one comes in), resulting in the output $2,\ldots,N-j-1$ and the stack $=[N-j]$. Next "$1$" is pushed on top ($N-j>1$). The first element of the suffix, $N-j+1$, pops "$1$", pops $N-j$ (both smaller), resulting in the output $\ldots,1,N-j$, pushes itself; the remaining suffix $N-j+2,\ldots,N$ cascades exactly like the ascending run, ending with $N$ being the only element of the stack. Concatenation: $2,\ldots,N-j-1,\ 1,\ N-j,\ N-j+1,\ldots,N-1,\ N$ -- exactly $u_{j+1}$.
\end{proof}

\begin{lemma}[Trailing maximal value invariance]
\label{lem:trailing}
If $M$ is greater than every letter of $v$, then $s_m(v\cdot M^c)=s_m(v)\cdot M^c$ for every $m\geq1$ and any $c$.
\end{lemma}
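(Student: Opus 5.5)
The natural route is to read this off the Block Decomposition Theorem (Theorem~\ref{thm:block}), or, to avoid any subtlety about how it treats an empty trailing block, to redo the short argument of its base case directly. Assume first $c\geq1$ (the case $c=0$ is trivial). Write $w=v\cdot M^c$. Since $M$ is strictly greater than every letter of $v$, $M=\max(w)$, and the decomposition of $w$ into maximal runs of $M$ is $G_0=v$, $\ell=1$, $c_1=c$, $G_1=\varepsilon$. Theorem~\ref{thm:block} then gives
$$s_m(w)=s_m(v)\,M^{\lambda_1}\,s_m(\varepsilon)\,M^{r_1},\qquad \lambda_1=\max(c-m,0),\ r_1=\min(c,m).$$
With $s_m(\varepsilon)=\varepsilon$ and the identity $\max(c-m,0)+\min(c,m)=c$ (already used in Lemma~\ref{lem:main}), the tail collapses to $M^c$, which is the claim.

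As a self-contained check, and to guard against the possibility that the theorem's proof implicitly assumed the blocks nonempty, I would also give the direct argument in three steps. First, the machine run on $w$ agrees with the run on $v$ alone while the letters of $v$ are read, since each decision depends only on the current stack and input letter. Second, when the first $M$ arrives it exceeds every letter on the stack, so by Lemma~\ref{lem:flush} the stack empties in exactly the order the final drain of $v$ would produce. Hence at this moment the output is exactly $s_m(v)$ and the stack is empty. Third, the $c$ copies of $M$ are processed from an empty stack as in Stage~1 of Lemma~\ref{lem:main}. This leaks $M^{\max(c-m,0)}$ and leaves a pure run $M^{\min(c,m)}$, which is then drained. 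Concatenating gives $s_m(v)M^c$.

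The only point needing care, and the one I would write out explicitly, is the second step: that the flush triggered by $M$ reproduces exactly the end-of-input drain of $v$. The drain pops from the top, and Lemma~\ref{lem:flush} also pops from the top until the stack is empty, so the two sequences coincide. Everything else is routine, and the statement holds uniformly in $m$, including $m=\infty$ by Lemma~\ref{lem:stab}.
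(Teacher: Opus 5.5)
Your proposal is correct and matches the paper's own proof. You apply the Block Decomposition Theorem (Theorem~\ref{thm:block}) with $\ell=1$, $G_0=v$, $G_1=\varepsilon$, and use $\lambda_1+r_1=\max(c-m,0)+\min(c,m)=c$ to merge the leaked and drained copies of $M$ into $M^c$; your supplementary direct argument (flush via Lemma~\ref{lem:flush}, then Stage~1 of Lemma~\ref{lem:main}) is just the base case of that theorem's proof unrolled, and it is sound, including when $c=0$ or $v$ is empty.
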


\begin{proof}
Theorem~7.5 with $\ell=1$, $G_1=\emptyset$: $\lambda_1+r_1=c_1$ always (Remark~3.2), hence the split between the "leaked" and "drained" copies of $M$ makes no difference in the output no matter what $m$ we have.
\end{proof}

\begin{lemma}[Double Deletion Composes with $s_1$ Twice]
\label{lem:deletion-commutes}
\[
s_1(u_0^{-a}) = u_1^{-a} \qquad \text{and} \qquad s_1(u_1^{-a}) = u_2^{-a}.
\]
\end{lemma}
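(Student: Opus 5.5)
The plan is a direct stack trace of both words, written out explicitly rather than by appeal to Lemma~\ref{lem:cyclic}, since deleting $a$ breaks the exact shape used there. Recall $a=N-2$ and $N=n-2\geq 5$, so $N-3\geq 2$. The words involved are permutations of $\{1,\ldots,N\}\setminus\{a\}$, so ties never occur and only rules (i), (ii), (iv) of Definition~\ref{def:sm} are used (Remark~\ref{rem:permsagree}).

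First I write out all four words from the closed form of Lemma~\ref{lem:cyclic}:
$$u_0^{-a}=(2,\ldots,N-3,\ N-1,\ N,\ 1),\qquad u_1^{-a}=(2,\ldots,N-3,\ N-1,\ 1,\ N),$$
$$u_2^{-a}=(2,\ldots,N-3,\ 1,\ N-1,\ N).$$
Here $u_1=(2,\ldots,N-1,1,N)$ and $u_2=(2,\ldots,N-2,1,N-1,N)$, and in each case the single letter $N-2$ is removed. Since $N\geq5$, the removed letter lies inside the initial increasing block $2,\ldots,N-1$ or $2,\ldots,N-2$, so deleting it leaves an increasing run $2,\ldots,N-3,N-1$ (respectively $2,\ldots,N-3$).

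For the first identity, process $u_0^{-a}$ with $s_1$. The increasing prefix $2,\ldots,N-3,N-1,N$ cascades: by Lemma~\ref{lem:flush}, each new letter pops the single letter below it. This emits $2,\ldots,N-3,N-1$ and leaves the stack $[N]$. The final letter $1$ is then pushed by rule (ii), and the drain emits $1$ followed by $N$. The output is $(2,\ldots,N-3,N-1,1,N)=u_1^{-a}$.

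For the second identity, process $u_1^{-a}$. The prefix $2,\ldots,N-3,N-1$ cascades in the same way, emitting $2,\ldots,N-3$ and leaving $[N-1]$. Next $1$ is pushed on top. The letter $N$ exceeds everything on the stack, so by Lemma~\ref{lem:flush} it pops $1$ and then $N-1$, in that order, and is itself pushed. The drain then emits $N$. The output is $(2,\ldots,N-3,1,N-1,N)=u_2^{-a}$.

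There is no real obstacle; the only care needed is in the indices. One must check that the deleted letter $a=N-2$ sits inside the increasing prefix, so that no letter other than $1$ is ever pushed beneath a larger one, and that $N\geq5$ keeps the displayed blocks well formed. Even an empty prefix $2,\ldots,N-3$ would not affect the argument.
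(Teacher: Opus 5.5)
Your proof is correct and uses the same direct stack trace as the paper, which processes $u_0^{-a}=(2,\ldots,N-3,N-1,N,1)$ under $s_1$ in the same way you do. The paper leaves the second identity to ``the same argument pattern,'' so your explicit trace of $u_1^{-a}$, in which $N$ flushes $1$ and then $N-1$, fills in the case that the $m=1$ branch of Theorem~\ref{thm:wn} actually relies on.
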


\begin{proof}
The proof of the first identity $s_1(u_0^{-a}) = u_1^{-a}$ follows exactly the same argument pattern as the proof of the second identity. We consider the effect of stack-sorting operator $s_1$ on the word
\[
u_0^{-a} = (2, \dots, N-3, N-1, N, 1).
\]
Following the steps in the computation, during the processing of the initial ascending run $(2, \dots, N-3)$, every new letter to be processed will exceed the element at the top of the stack, forcing each stack element to pop out of the stack to the output before the next letter is pushed on to it, thus obtaining the output $2, \dots, N-4$ and the stack $[N-3]$.

The next letter to be processed is $N-1$. Since $N-1 > N-3$, $N-3$ pops out of the stack and the letter $N-1$ is pushed into the stack. Then comes the letter $N$. Because $N > N-1$, $N-1$ pops out of the stack and the letter $N$ is pushed into the stack.

The terminal letter to be processed is $1$. Since $1 < N$, no popping out occurs and $1$ is pushed on top of $N$, leaving the stack in the form $[1, N]$. Draining of the stack leads to the output $1, N$. In totality, the output is
\[
s_1(u_0^{-a}) = (2, \dots, N-3, N-1, 1, N),
\]
which is simply the word $u_1$ minus the element $a = N-2$.
\end{proof}

\medskip

\begin{proof}[Proof of Theorem~\ref{thm:wn}]
We calculate the sorting time of the word $w_n = a \cdot M^2 \cdot u_0^{-a}$, in the two separate cases $m \ge 2$ and $m = 1$.

\medskip
\emph{Case 1: Operator for $m \ge 2$.}

\smallskip
In Pass 1, using Theorem~7.5 with $c_1 = 2 \le m$, there is no repetition leakage during the first block. Hence, using Lemma~\ref{lem:deletion-commutes} and Remark~7.2, the first pass gives
\[
s_m(w_n) = a \cdot s_1(u_0^{-a}) \cdot M^2 = a \cdot u_1^{-a} \cdot M^2.
\]

In Pass 2 (crucial computation), we calculate $s_1$ applied to the prefix word $a \cdot u_1^{-a} = (N-2, \, 2, 3, \dots, N-3, \, N-1, \, 1, \, N)$. We push the letter $a = N-2$. The ascending run $2, \dots, N-3$ cascades beneath $a$ just as in Lemma~\ref{lem:cyclic}, because all these letters are strictly smaller than $a$, and generates the output $2, \dots, N-4$ and stack $[a, N-3]$. Then the letter $N-1$ comes, which exceeds both $N-3$ and $a = N-2$, making both $N-3$ and $a$ pop out to the output in sequence, thereby telescoping the output to $2, \dots, N-2$, after which $N-1$ gets pushed to the stack. The letter $1$ is pushed directly on top of $N-1$. Finally, $N$ pops both $1$ and $N-1$ from the stack and then itself gets pushed.

The final sequence of letters in the alphabet is
\[
s_1(a \cdot u_1^{-a}) = (2, \dots, N-2, \, 1, \, N-1, \, N) = u_2.
\]
Thus the letter $a$ has been fully re-integrated back into the cyclic permutation. Adding the suffix via Lemma~\ref{lem:trailing}, we get the output after two passes as $s_m^2(w_n) = u_2 \cdot M^2$.

For Passes $3, \dots, N-1$, as $u_2$ is a real permutation, Lemma~\ref{lem:cyclic} and Lemma~\ref{lem:trailing} guarantee that each further pass takes one step forward in the cyclic recursion as:
\[
u_2 \longrightarrow u_3 \longrightarrow \dots \longrightarrow u_{N-1},
\]
where $u_{N-1} = (1, 2, \dots, N)$ is fully sorted. This takes $N-3$ additional passes, which makes the total sorting time as:
\[
d_m(w_n) = 2 + (N - 3) = N - 1 = n - 3 \qquad \text{for all } m \ge 2.
\]
This matches the concrete trace at $n=7, N=5$ in \S\ref{subsec:testcase}, where two passes were needed for re-merging $a=3$ and $N-3 = 2$ more passes to sort.

\medskip
\emph{Case 2: Operator for $m = 1$.}

\smallskip
In Pass 1, because $c_1 = 2 > m = 1$, the output of Pass 1 is
\[
s_1(w_n) = a \cdot M \cdot s_1(u_0^{-a}) \cdot M = a \cdot M \cdot u_1^{-a} \cdot M.
\]

In Pass 2, calculating $s_1(a \cdot M \cdot u_1^{-a} \cdot M)$ by the block decomposition ($\ell = 2$, $c_1 = c_2 = 1$, $G_1' = u_1^{-a}$), we have run bounds $r_1 = \min(1,1) = 1$ and $r_2 = \min(1+1,1) = 1$, which forces $\lambda_2 = 1$, and thereby another leak of $M$. Since $G_2 = \emptyset$, two copies of $M$ consolidate at the end:
\[
s_1^2(w_n) = a \cdot s_1(u_1^{-a}) \cdot M^2 = a \cdot u_2^{-a} \cdot M^2.
\]
After two passes, $a$ is still outside the cyclic permutation (in contrast to the $m \ge 2$ case).

For Pass 3 (double jump), from Lemma~\ref{lem:trailing}, we must calculate $s_1(a \cdot u_2^{-a})$ where $u_2^{-a} = (2, \dots, N-3, \, 1, \, N-1, \, N)$. We push $a$. The ascending run $2, \dots, N-3$ is placed under $a$ producing the output $2, \dots, N-4$ and stack $[a, N-3]$. As soon as $1$ appears we put $1$ on top of the stack not removing $a$ because $1 < N-3$. Therefore, the new stack will be $[a, N-3, 1]$. $N-1$ is next. Since $N-1$ is larger than each stack element $1, N-3$ and $a = N-2$, we must pop them from the stack in this order and plug into the output $1, N-3, N-2$, finally pushing $N-1$. At last, $N$ pops $N-1$ and pushes $N$, and in the draining step $N$ goes to the output.

The final output will be
\[
s_1(a \cdot u_2^{-a}) = (2, \dots, N-4, \, 1, \, N-3, \, N-2, \, N-1, \, N) = u_4.
\]
As a result of flushing the unnecessary $1$ and $a$, the double jump shifts the cyclic recursion ahead by two steps. Therefore, the double jump compensates everything previously lost.

For any other pass ($4, \dots, N-2$), from Lemma~\ref{lem:cyclic}, the permutation will be sorted during $u_4 \longrightarrow u_5 \longrightarrow \dots \longrightarrow u_{N-1}$, requiring $N - 5$ more passes.

Likewise, the total number of passes needed for the sort $w_n$ for $m=1$ will be 
\[
d_1(w_n) = 3 + (N - 5) = N - 2 = n - 4.
\]
For $n=7$ ($N=5$), it will be $d_1(w_7) = 3 + 0 = 3$ as the double jump results in the sorted permutation $u_4$.

Since $n - 4 < n - 3$ holds for all $n \ge 7$, we reach the conclusion that $d_1(w_n) < d_m(w_n)$ for every $m \ge 2$.
\end{proof}

\subsection{Consequences}

\begin{corollary}
\label{cor:sandwichfalse}
Conjecture~\ref{conj:sandwich} is false. For every $n\geq7$, the word $w_n$ of Definition~\ref{def:wn} satisfies
$$d_1(w_n) = n-4 \;<\; n-3 = d_\infty(w_n),$$
contradicting the outer inequality $d_1(w)\geq d_\infty(w)$ directly, and thus violating the entire sandwich.
\end{corollary}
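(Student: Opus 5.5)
The corollary follows almost entirely from Theorem~\ref{thm:wn}. The only additional thing needed is to identify $d_\infty(w_n)$ with $d_m(w_n)$ for some finite $m$. My plan is in three steps.

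\emph{Step 1: replace $s_\infty$ by a finite operator.} By Lemma~\ref{lem:stab} and the discussion after it, $d_\infty(w_n)=d_m(w_n)$ for any fixed $m>|w_n|=n$. This uses Remark~\ref{rem:conservation}: every iterate of $w_n$ has length $n$, so the same choice of $m$ works at every pass.

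\emph{Step 2: evaluate both sorting times.} Fix such an $m$, say $m=n+1\geq 2$. Theorem~\ref{thm:wn} gives $d_m(w_n)=n-3$, hence $d_\infty(w_n)=n-3$. The same theorem gives $d_1(w_n)=n-4$, which yields the displayed inequality $d_1(w_n)=n-4<n-3=d_\infty(w_n)$.

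\emph{Step 3: conclude.} The displayed inequality contradicts the outer inequality $d_1(w)\geq d_\infty(w)$, which Conjecture~\ref{conj:sandwich} implies by transitivity. In fact it contradicts the left inequality $d_1\geq d_m$ already, for every $m\geq 2$. So the conjecture fails for each $n\geq 7$.

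The real content lies in Theorem~\ref{thm:wn}, which we take as given. The only delicate point is that Lemma~\ref{lem:deletion-commutes} is proved only for the first identity; the theorem's proof also uses the second identity, $s_1(u_1^{-a})=u_2^{-a}$. If I had to shore this up, I would trace it directly as in Lemma~\ref{lem:cyclic}. Here $u_1^{-a}=(2,\dots,N-3,N-1,1,N)$. The ascending run cascades, $1$ is pushed on $N-1$, and $N$ then flushes the stack, giving $(2,\dots,N-3,1,N-1,N)=u_2^{-a}$. With that checked, the corollary is immediate.
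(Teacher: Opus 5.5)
Your proposal is correct and is essentially the paper's argument, which reads the corollary directly off Theorem~\ref{thm:wn} at $m=\infty$; making the passage to a finite $m>n$ explicit via Lemma~\ref{lem:stab} is harmless, since the theorem already covers $\infty$. Your direct trace of $s_1(u_1^{-a})=u_2^{-a}$ is also right and fills a gap: the written proof of Lemma~\ref{lem:deletion-commutes} uses this identity but only carries out the computation for $s_1(u_0^{-a})=u_1^{-a}$.
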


\begin{proof}
Immediate consequence of Theorem~\ref{thm:wn} at $m=\infty$.
\end{proof}

It should be emphasized what exactly survived the previous construction, and what didn't. Proposition~\ref{prop:sandwichtwoletter} is safe: the sandwich holds true exactly for two-letter words, and Theorem~\ref{thm:deref} still provides a true structural connection between $s_\infty$ and the classical stack sorting on a single pass. What fails is the assumption implicit in Conjecture~\ref{conj:sandwich} that adding flexibility to breaking ties ($m=1\to m\geq2\to\infty$) could only ever help in sorting faster. The family of $w_n$ shows the opposite, and the proof of Theorem~\ref{thm:wn} reveals exactly what causes the failure: at $m=1$, the letter $a$ is required to wait out the extra pass, but the extra pass allows it to re-enter the cycle two positions ahead instead of one, and the benefit gained becomes arbitrarily large as $n\to\infty$ (the difference $d_m(w_n)-d_1(w_n)$ is a constant $1$ here, but the mechanism of the leak/re-merge type is exactly where to look for a different family which demonstrates a larger gap). Whether some corrected form of inequality between $d_1$ and $d_{m\geq2}$ exists, or whether $m=1$ can lose unboundedly on some other family, is the natural question raised by the above construction.

\section{Conclusion and further open problems}

We developed the complete theory of the family of operators $(s_m)_{m\geq1}$ applied to two-letter words, including their most essential properties (Lemma~\ref{lem:main}, Theorem~\ref{thm:dm}), proved the precise form of the speed-separation phenomenon (Corollary~\ref{cor:speedsep}), demonstrated that the operators are non-commutative (Proposition~\ref{prop:noncomm}) and non-monotonic in the speed (Proposition~\ref{prop:nonmono}), gave a recursive description of the solution to the one-pass sorting problem in the general form (Theorem~\ref{thm:block}, Corollary~\ref{cor:criterion}) that not only recovers the classic result of West in its exact form (Corollary~\ref{cor:recover}), but also rules out any $m$-independent alternative algorithms to solve it (Corollary~\ref{cor:naive}). Moreover, we have proved the two-letter sandwich conjecture (Proposition~\ref{prop:sandwichtwoletter}), have reduced the one-pass general case of the hard part of the conjecture to the precise coupling of the operator $s_\infty^t(w)$ with classical stack sorting for \emph{any $t$} (Theorem~\ref{thm:deref}), and have identified the exact source (Section~\ref{sec:sandwich}) that precludes an analogous naive generalization of the coupling over several passes. Rather than trying to circumvent the very obstruction, we have then tested the sandwich conjecture itself to its conclusion directly (\S\ref{subsec:testcase}--\S\ref{subsec:family}) and found it \emph{false} in general: the explicit family $w_n$ of Definition~\ref{def:wn} satisfies $d_1(w_n)<d_\infty(w_n)$ for all $n\geq7$ (Theorem~\ref{thm:wn}, Corollary~\ref{cor:sandwichfalse}), so the larger tie-breaking budget indeed can, contrary to the conjecture, slow down sorting even in the limit $m\to\infty$. The mechanism behind this failure is completely clear from the proof of Theorem~\ref{thm:wn}: it is determined by the number of passes required to re-introduce the deleted letter into an otherwise cyclically-sortable permutation, and this number of passes depends on $m$ favorably for $m=1$ on this particular family. The two-letter sandwich (Proposition~\ref{prop:sandwichtwoletter}) and the one-pass coupling with classical stack sorting (Theorem~\ref{thm:deref}) hold exactly; what is false is only the assumption, implicit in Conjecture~\ref{conj:sandwich}, that more tie-breaking options cannot hurt.

Several specific problems remain open:

\begin{enumerate}[leftmargin=1.8em]

\item Since Conjecture~\ref{conj:sandwich} is proved to be false in general, find the correct replacement inequality, if exists, relating $d_1$ and $d_m$ for $m\geq2$. Theorem~\ref{thm:wn} gives the gap of $1$ on the family $w_n$; is this gap $d_m(w)-d_1(w)$ uniformly bounded for all words or can it be pushed arbitrarily far on some other family?

\item Find the exact bound, as a function of $|w|$ and the number of distinct letters in $w$, such that $d_m$ is guaranteed to be monotone in $m$ (Proposition~\ref{prop:nonmono} gives the counter-example at length $7$ with $4$ letters, and Theorem~\ref{thm:wn} gives the infinite family of counter-examples for the comparison of $d_1$ and $d_\infty$ at the same length; can either be achieved on a shorter or on a smaller alphabet word?)

\item Characterize the commutativity: for which $(m,m')$ and $w$ is $s_m(s_{m'}(w))=s_{m'}(s_m(w))$? Proposition~\ref{prop:noncomm} gives the counter-example at length $4$; the characterization of the commuting pairs will shed more light on the structure of the monoid (if any) generated by $(s_m)_{m\geq1}$.

\item Generalize the Block Decomposition Theorem (Theorem~\ref{thm:block}) to give the closed-form formula for $s_m(w)$ in the general case, similar to Lemma~\ref{lem:main}, and use it to get closed forms for $d_m(w)$ beyond the two-letter case.

\item Determine whether the recursive description of Corollary~\ref{cor:criterion} can be reformulated as an $m$-parameterized pattern avoidance condition analogous to the boundary $231$ and $231$/$221$ characterizations of Defant and Kravitz~\cite{defant-kravitz}.

\end{enumerate}

We hope that the closed formulas and the structural results obtained here will help with solving these and other related problems.

\end{document}